\title[Warm-starting OA for parameterized convex MINLP]{Warm-starting outer approximation for parameterized convex MINLP}
\author[E. Tamm]{Erik Tamm}
\address{Department of Mathematics, KTH Royal Institute of Technology, Lindstedts\-v\"agen~25, Stockholm, SE-10044, Sweden}
\email{etamm@kth.se}
\urladdr{https://orcid.org/0009-0008-2878-1458}
\author[G. Eichfelder]{Gabriele Eichfelder}
\address{Institute of Mathematics, Technische Universit\"at Ilmenau, Weimarer Strasse~25, Ilmenau, DE-98693, Germany}
\email{gabriele.eichfelder@tu-ilmenau.de}
\urladdr{https://orcid.org/0000-0002-1938-6316}
\author[J. Kronqvist]{Jan Kronqvist}
\address{Department of Mathematics, KTH Royal Institute of Technology, Lindstedts\-v\"agen~25, Stockholm, SE-10044, Sweden}
\email{jankr@kth.se}
\urladdr{https://orcid.org/0000-0003-0299-5745}
\date{June 17, 2026}
\thanks{This work was supported by the Swedish Research Council (grant number 2022-03502).}
\pgfplotsset{width=10cm,compat=1.18}
\pgfplotsset{
    /pgfplots/layers/Bowpark/.define layer set={
        axis background,axis grid,main,axis ticks,axis lines,axis tick labels,
        axis descriptions,axis foreground
    }{/pgfplots/layers/standard},
}
\newcommand{\norm}[2]{\| #1 \|_{#2}}
\newcommand{\R}{\mathbb{R}}
\newcommand{\Z}{\mathbb{Z}}
\newcommand{\F}{\mathcal{F}}
\newcommand{\X}{\mathcal{X}}
\newcommand{\T}{\mathcal{T}}
\newcommand{\FL}{\F_{L}}
\theoremstyle{plain}
\newtheorem{theorem}{Theorem}
\newtheorem{proposition}[theorem]{Proposition}
\newtheorem{lemma}[theorem]{Lemma}
\theoremstyle{definition}
\newtheorem{definition}[theorem]{Definition}
\newtheorem{example}[theorem]{Example}
\theoremstyle{remark}
\newtheorem{remark}[theorem]{Remark}
\begin{document}

\begin{abstract}
We address the challenge of efficiently solving parameterized sequences of convex Mixed-Integer Nonlinear Programming (MINLP) problems through warm-starting techniques. We focus on an outer approximation (OA) approach, for which we develop the theoretical foundation and present two warm-starting techniques for solving sequences of convex MINLPs. These types of problem sequences arise in several important applications, such as, multiobjective MINLPs using scalarization techniques, sparse linear regression, hybrid model predictive control, or simply in analyzing the impact of certain problem parameters.

The main contribution of this paper is the mathematical analysis of the proposed warm-starting framework for OA-based algorithms, which shows that a simple adaptation of the linear relaxation from one problem to the next can greatly improve computational performance. In the case that the parameters depend linearly on the parameter, we prove under some assumptions that one of the proposed warm-starting techniques results in only one OA iteration to find an optimal solution and verify optimality. Numerical results demonstrate noticeable performance improvements compared to two common initialization approaches, and show that the warm-starting can also in practice result in a single iteration to converge for several problems in the sequences. Our methods are especially effective for problems where consecutive problems in the sequence are similar, and where the integer part of the optimal solutions remains constant for several problems in the sequence. The results show that it is possible, both in theory and practice, to perform warm-starting to significantly enhance the computational efficiency of solving parameterized convex MINLPs.
\end{abstract}

\keywords{Mixed-integer nonlinear programming, Warm-starting, Outer approximation, Multiobjective optimization}

\maketitle

\section{Introduction}\label{sec1}
Mixed-Integer Nonlinear Programming (MINLP) is a general class of problems that includes both integer and continuous variables, and nonlinear functions, enabling a wide range of applications to be modelled as such. For more details on MINLP applications, see e.g., \cite{belotti_2013, boukouvala_2016, floudas_1995, trespalacios_2014}. We specifically consider the case of sequences of convex MINLPs where the change between the problems in the sequence is known a priori and is small. We consider an MINLP to be convex if the objective function is convex and all nonlinear constraints are inequality constraints with convex functions. A more precise definition of the class of problems we consider is given in Subsection \ref{sec:background_prob_state}. Several algorithms for convex MINLPs have been proposed and investigated over the years, e.g., see \cite{ambrosio_2013, Kronqvist_2019} for an overview. However, to the best of the authors' knowledge there has been little work on warm-starting for sequences of problems. Existing work has mainly focused on warm-starting subproblems arising when solving a single convex MINLP, e.g., \cite{abhishek2010filmint, benson2011mixed}, and not on sequences of problems. The main research questions behind our work have been: ``Can the concept of warm-starting be successfully used when solving sequences of convex MINLP instances?'' and ``How can it be applied effectively?''. Due to the discrete nature of MINLP, it is not clear from a theoretical or practical point of view if warm-starting can be successfully used. How warm-starting should be done clearly depends on the algorithmic framework used for solving MINLP instances, and, therefore, we focus on one main algorithm together with the warm-starting approaches. We hope that this paper can be a starting point for more research into the area of sequences of MINLPs, both in terms of algorithms and applications.

We have restricted ourselves to convex MINLPs, since we can more effectively build upon the algorithms used to solve convex MINLPs to develop warm-starting techniques. We have chosen to focus on the Outer Approximation (OA) algorithm for solving MINLPs, and there are two main reasons behind this. First, the algorithm is commonly used and considered one of the most efficient ways for solving convex MINLPs. This can, for example, be seen in the review of software and algorithms to solve convex MINLPs \cite{Kronqvist_2019} and the algorithmic framework presented in \cite{Bonami_2008}. Secondly, the algorithm is simple to interpret and visualize, giving us a good intuition of the method and what information can be used to perform warm-starting. There are several variants of the OA algorithm, and we specifically chose the one presented in \cite{fletcher_1994} as the basis of this work.

In broad terms, any computational technique that takes advantage of prior information before solving the problem to reduce the computational burden can be considered a warm-starting method. Prior information could be knowledge gained from solving a similar problem, structural knowledge of the problem, or simply a good guess for the optimal solution. We focus on using, and adapting, the linear relaxation obtained from solving similar problems. Since solving the Mixed-Integer Linear Programming (MILP) subproblems typically is the computational bottleneck for the OA algorithm, the goal is to decrease the number of such subproblems. By starting from a good linear relaxation, we strive to reduce the number of iterations for the OA algorithm and the number of integer assignments that need to be explored to solve the problem.

The main contributions of this paper are I) the presentation of a framework for warm-starting OA-based algorithms, and II) showing that warm-starting the OA algorithm is possible for sequences of convex MINLPs both in theory and in practice. We present two simple techniques for warm-starting that build upon adjusting and reusing the previously obtained linear relaxation. In the case where the parameters depend linearly on the parameter, we prove under some assumptions that one of the proposed warm-starting techniques will successfully make the OA algorithm converge after only a single iteration. Numerical results show that terminating after a single iteration is not only a theoretical possibility, but also observed in practice. Overall the numerical results show great potential for the proposed warm-starting techniques for sequences where the change between problems is small, and we can greatly reduce the number of iterations and the time to solve the problems. The results also show that the trivial warm-starting technique of starting at a point near the optimal solution is not an effective warm-starting approach for a convex MINLP. Example \ref{ex:warm-starting} in Subsection \ref{sec:example_warm-starting} illustrates this, where starting at the optimal solution requires as many iterations of the OA algorithm to converge as the typical approach of initializing the algorithm by solving the continuous relaxation. This paper shows, through the theoretical and numerical results, that warm-starting applied in this setting can give significant performance improvements and that there is great potential for further research in new warm-starting techniques.

There has been a stream of research into parametric MINLP, which also focuses on MINLPs with some varying parameters. Parametric MINLP builds upon concepts from parametric MILP, and for more details on parametric MILP we refer to, e.g., \cite{dua2000algorithm,Gal_1995,mitsos2009parametric}. To the best of the authors' knowledge, the work on parametric MINLP has been focusing on the case where the integer variables, here denoted by $y$, exclusively appear linearly and on so-called right hand side parametrization. This means that the constraints are assumed to be of the form $g(x) + Ay \leq b + F\lambda$, where $\lambda$ is the parametrization parameter, e.g., see \cite{Acevedo_Salgueiro_2003,Dua_2009,Papalexandri_Dimkou_1998}. However, the main difference compared to our work is that in parametric MINLP the aim is to describe the optimal solution as a function of the parameter. Our focus is to efficiently solve the problem for different values of the parameters. We choose the term \textit{parameterized} MINLP to clarify the difference from the term \textit{parametric}, used for example in parametric programming. Furthermore, we do not enforce restrictions on the integer variables to appear linearly in the problem.

Multiobjective MINLP is a class of problems that initially sparked our interest in parameterized MINLPs. By solving a multiobjective problem using scalarization techniques, we obtain a parameterized MINLP with a highly structured change between problems. This leads to a sequence of MINLPs where each problem can be computationally challenging to solve. Therefore, warm-starting techniques are of great interest in the multiobjective problem setting.

A link between the efficient frontier of a multiobjective MILP and the value function of a related single-objective MILP was investigated in \cite{Fallah_2024}. The authors explored the structure of the value function by expanding results from the case of pure integer programs. They also showed an equivalence between the efficient frontier and the value function, thereby arguing that they are interchangeable in algorithmic development. Based on these findings, a cutting-plane algorithm to find the efficient frontier of a multiobjective MILP was developed. Similarly to parametric programming, the focus is on approximating the function in the image space. We reiterate that our goal is to find optimal solutions for different parameter values efficiently without claiming that all cases are covered, or that it is a suitable approximation of the objective value as a function of the parameter. Further developing the theory of the value function to the MINLP case could be an interesting direction for future research.

Other applications where warm-starting can be particularly useful include sparse linear regression and hybrid model predictive control. These applications, as well as multiobjective problems, are included as test problems in Section \ref{sec:num_exp} on numerical results.

The paper is organized as follows. First, in Section \ref{sec:background}, we present the parameterized convex MINLP framework and introduce some key concepts. Then, in Section \ref{sec:prob_form}, some theoretical results for the problem are formulated, and in Section \ref{sec:algorithm} the proposed algorithms are presented. In Section \ref{sec:num_exp} numerical experiments for five types of test problems are presented and the results are analyzed. Finally, we give some conclusions and ideas for future research in Section \ref{sec:conclusions}.

\section{Problem formulation and background}\label{sec:background}
In this section we give a brief overview of concepts and algorithms that we build the warm-starting methods on. First, in Subsection \ref{sec:notation}, some notation used is specified. In Subsection \ref{sec:background_prob_state} we then present the class of problems that we focus on. In Subsection \ref{sec:background_minlp} and \ref{sec:background_multiobj} some terminology used for MINLP and multiobjective optimization respectively is specified. The OA algorithm is described in Subsection \ref{sec:background_oa_static}. The algorithm is recalled in detail as we build the warm-starting methods upon it. Lastly, in Subsection \ref{sec:example_warm-starting}, an example is presented to show that starting in an optimal solution is not sufficient for successfully warm-starting the OA algorithm.

\subsection{Notation}\label{sec:notation}
We introduce some notation that is used in the paper. First, we define the index sets $[n] := \{1, 2, \ldots, n\}$ and $[n]_0 = \{0\} \cup [n]$ for any natural number $n$. For $x, y \in \R^n$ we say that $x \leq y$ if $x_i \leq y_i$ for all $i \in [n]$. By the notation $\| \cdot \|$ we denote the Euclidean norm unless otherwise specified. The open ball around a point $x \in \R^n$ with radius $\delta > 0$ is denoted as $$B(x, \delta) := \{u \in \R^n \mid \| x - u \| < \delta \}.$$

\subsection{Parameterized convex MINLP}\label{sec:background_prob_state}
Let $P \subset \R^k$ be a nonempty and connected set, i.e., there are no sets $A, B \subset \R^k$ such that $A \cap \text{cl}(B) = \text{cl}(A) \cap B = \varnothing$ and $A \cup B = P$ (see \cite[Definition 2.45]{Rudin_1976}). For parameters $\lambda \in P$, we consider the family of problems of the form
\begin{equation}\tag{$P_\lambda$}\label{eq:p_lambda}
    \begin{array}{rl}
        \displaystyle \min_{x, y} & c^\top x + d^\top y \\
        \text{s.t.} & g^\lambda(x,y) \leq 0, \\
         & h(x,y) \leq 0, \\
         & Ax+By \leq b, \\
         & x \in \R^n \text{, } y \in \Z^m
    \end{array}
\end{equation}
where $c \in \R^n$, $d \in \R^m$, $g^\lambda: \R^{n+m} \rightarrow \R^p$, $h: \R^{n+m} \rightarrow \R^q$, $A \in \R^{r \times n}$, $B \in \R^{r \times m}$ and $b \in \R^r$. We assume that each component in the functions $g^\lambda$ and $h$ is convex and once continuously differentiable. It is also assumed that the set $$ \FL := \{(x,y) \in \R^{n+m} \mid Ax + By \leq b \}$$ is a nonempty bounded set. As is commonly done, we assume that for each fixed $y$ such that the feasible set is nonempty the linear independence constraint qualification (LICQ) holds in all optimal solutions (see e.g. \cite{fletcher_1994}). Note that $g^\lambda$ depends on the parameter $\lambda \in P \subset \R^k$. We finally assume that for all $(x, y) \in \FL$, the function $\lambda \mapsto g^\lambda(x, y)$ is continuous in $\lambda$ in $P$. The intuition behind the continuity assumption is that the feasible sets of ($P_{\lambda^0}$) and ($P_{\lambda^1}$) will be ``similar'' if $\| \lambda^0 - \lambda^1 \|$ is small enough. Without this assumption the feasible sets could change drastically, meaning that efficient warm-starting will be difficult or impossible to achieve. We are focusing on warm-starting in the setting where the feasible sets of consecutive problems are similar.

We refer to a problem of the form ($P_\lambda$) as a parameterized convex MINLP, which depends on a parameter $\lambda$. The goal is to solve the problem for a sequence of values of $\lambda \in P$.

We can assume without loss of generality that the objective function is linear since if it has a nonlinear objective function it is possible to use an epigraphical reformulation to obtain a linear objective function.

Convex MINLP can be solved using several techniques, e.g., branch and bound \cite{Dakin_1965, Land_Doig_1960, Gupta_1986}, generalized Benders decomposition \cite{Geoffrion_1972}, extended cutting planes \cite{Westerlund_1995} or outer approximation \cite{Duran_Grossmann_1986, fletcher_1994}. For an overview of the methods see for example \cite{bonami_2012, ambrosio_2013, Kronqvist_2019}. Even though convex MINLP is simpler to solve than general MINLP, it is still not trivial. The convexity of the problem means that the algorithms are simpler than for the more general non-convex setting. Therefore, we are able to progress further with the theoretical analysis and algorithmic design. We are optimistic that concepts and ideas from this paper can be extended to the non-convex setting, but we limit ourselves to the convex setting.

\subsection{Basic concepts of outer approximation}\label{sec:background_minlp}
Key components of the OA algorithm are the concepts of a valid linear constraint and a linear relaxation. A linear inequality $a^\top x \leq b$ is called valid for a set $\mathcal{S} \subset \R^n$ if for every point $\bar{x} \in \mathcal{S}$ it holds that $a^\top \bar{x} \leq b$. A set $T := \{x \in \R^n \mid (a^j)^\top x \leq b_j \text{ for } j \in [m] \}$ defined by $m$ linear inequalities forms a linear relaxation of a set $\mathcal{S} \subset \R^n$ if $\mathcal{S}$ is a subset of $T$. For more details, see, e.g., \cite{Conforti_2014}.

A well-known property of convex and continuously differentiable functions $f\colon\R^n \rightarrow \R$ is that
    \begin{equation*}
        f(y) + \nabla f(y)^\top (x - y) \leq f(x) \mbox{ for all }x,y\in\R^n,
    \end{equation*}
see, e.g., \cite[pp. 69--70]{Boyd_2004}. It follows that, if $\mathcal{S} = \{x \in \R^n \mid f(x) \leq 0\}$ for some convex and continuously differentiable function $f\colon\R^n \rightarrow \R$ and $\hat{x} \in \R^n$, the linear inequality $f(\hat{x}) + \nabla f(\hat{x})^\top (x - \hat{x}) \leq 0$ is valid for $\mathcal{S}$. Furthermore, if $\hat{x} \notin \mathcal{S}$, the inequality separates $\hat{x}$ from $\mathcal{S}$. This linear inequality is called a gradient cut. The basic idea of the OA algorithm is to iteratively add gradient cuts until a good enough approximation of the nonlinear constraints is obtained.

\subsection{Basic concepts of multiobjective optimization}\label{sec:background_multiobj}
It was mentioned in the introduction that scalarization techniques for multiobjective optimization have been an important special case that initially motivated this work. This section will define some basic concepts from multiobjective optimization. For a general introduction to multiobjective optimization, we refer, for instance, to \cite{Ehrgott_2005, Eichfelder_2021, Miettinen_1999}.

A multiobjective optimization problem is an optimization problem where the goal is to minimize several, typically competing, objective functions simultaneously. The problem is stated as
\begin{equation}\label{MOP}\tag{MOP}
    \begin{array}{rl}
        \displaystyle \min_x & f(x) = (f_1(x), f_2(x), \ldots, f_p(x)) \\
        \text{s.t.} & x \in \mathcal{S}
    \end{array}
\end{equation}
where each $f_j: \R^n \rightarrow \R$ and $\mathcal{S} \subset \R^n$ is the feasible set. In theory, there can exist a feasible point where each $f_j$ is minimized, but in practice this generally does not happen. The goal is instead to find either efficient points (corresponding to the minimal solutions in the pre-image space) or nondominated points (corresponding to the optimal values in the image space).

Recall that $\bar{x} \in \mathcal{S}$ is efficient for \eqref{MOP} if there is no $x \in \mathcal{S}$ with $f_i(x) \leq f_i(\bar{x})$ for all $i \in [p]$ and with $f_j(x) < f_j(\bar{x})$ for at least one $j \in [p]$. In other words, it is not possible to improve the value of one of the objective functions without worsening the others. The corresponding point $f(\bar{x})$ is called nondominated. There is also a weaker notion of optimality called weakly efficient. A point $\bar{x} \in \mathcal{S}$ is called weakly efficient for \eqref{MOP} if there is no $x \in \mathcal{S}$ with $f_j(x) < f_j(\bar{x})$ for all $j \in [p]$, i.e., which has a strictly lower value for all objective functions. The corresponding point $f(\bar{x})$ is called weakly nondominated.

To solve a multiobjective optimization problem we can use so-called nonlinear scalarization techniques which can find all nondominated points even in the case of non-convex problems meaning that it is suitable for integer optimization. The most well-known of these nonlinear scalarization techniques is the so-called $\varepsilon$-constraint method. The method considers the problem of minimizing one of the objective functions, for instance $f_p$, while constraining the other elements of $f$ by upper bounds $\varepsilon_j$. For consistency of notation, the upper bounds will be denoted by $\lambda_j$ since they are the parameters of this scalarization. We obtain the problem as
\begin{equation}\label{eq:eps_prob}
\tag{$\text{P}_{\text{MOP}}(\lambda)$}
    \begin{array}{rl}
        \displaystyle \min_x & f_p(x) \\
        \text{s.t.} & f_j(x) \leq \lambda_j \text{, } j \in [p-1], \\
        & x \in \mathcal{S}.
    \end{array}
\end{equation}
 
Any optimal solution to \eqref{eq:eps_prob} is at least a weakly efficient point for \eqref{MOP}. In case the optimal solution of \eqref{eq:eps_prob} is unique, then it is even an efficient point for \eqref{MOP}. On the other hand, any efficient point for \eqref{MOP}, or correspondingly a nondominated point for \eqref{MOP}, can be found with the $\varepsilon$-constraint method by an appropriate parameter choice.   

Choosing the values of the $\lambda_j$ is not trivial if $p \geq 3$. To finely discretize the values for each $\lambda_j$  within a range and to solve the whole family of problems \eqref{eq:eps_prob} for all these parameters can make the problem too computationally demanding. For $p = 2$, referred to as biobjective, the problem \eqref{eq:eps_prob} reduces to minimizing one objective function and constraining the other. This means that we only need to solve \eqref{eq:eps_prob} for a scalar sequence of values for $\lambda$ to find at least a subset of the set of nondominated points.

\subsection{Outer Approximation Algorithm}\label{sec:background_oa_static}
The OA algorithm was first presented by Duran and Grossmann \cite{Duran_Grossmann_1986}. In their work, they follow the setting for generalized Benders decomposition by assuming that the integer variables are linearly present in the constraints. The version considered in this paper was presented by Leyffer and Fletcher \cite{fletcher_1994}, who generalized the idea to allow integer variables in nonlinear expressions and considered a different approach to handling infeasible integer assignments. Since the main concepts of the OA algorithm are important for understanding the method for the parameterized problem, we will here describe the algorithm for solving a convex MINLP without parameterized constraints.

The OA algorithm solves convex MINLPs, i.e., we consider problems of the form
\begin{equation}\tag{$P$}\label{eq:minlp_static}
    \begin{array}{rl}
        \displaystyle \min_{x, y} & c^\top x + d^\top y \\
        \text{s.t.} & g(x, y) \leq 0,\\
         & Ax + By \leq b, \\
         & x \in \R^n \text{, } y \in \Z^m
    \end{array}
\end{equation}
where $g: \R^{n+m} \rightarrow \R^p$, $A \in \R^{r \times n}$, $B \in \R^{r \times m}$ and $b \in \R^r$. We assume that each component $g_j$ of the function $g$ is convex and once continuously differentiable and that the set $$\FL := \{(x, y) \in \R^{n+m} \mid Ax + By \leq b\}$$ is nonempty and bounded. Finally, it is assumed that for any fixed $\hat{y}$ such that the feasible set of \eqref{eq:minlp_static} is nonempty, the LICQ holds at all optimal solutions of the reduced continuous problem, as is commonly done, e.g., in \cite{fletcher_1994}. This assumption is related to the so-called NLP subproblem which is presented below.

We use $\F$ to refer to the feasible set of \eqref{eq:minlp_static}. We can now define the three subproblems used in the algorithm. First, for a fixed value of $y = \hat{y}\in\Z^m$ we define the problem
\begin{equation*}\tag{NLP($\hat{y}$)}\label{eq:nlp_static}
    \begin{array}{rl}
        \displaystyle \min_x & c^\top x + d^\top \hat{y} \\
        \text{s.t.} & (x, \hat{y}) \in \F.
    \end{array}
\end{equation*}
The problem \eqref{eq:nlp_static} is a nonlinear continuous convex problem and a restriction of \eqref{eq:minlp_static}. This means that, if feasible, the minimal value is an upper bound for the minimal value of \eqref{eq:minlp_static}. As stated above, it is assumed that for any $\hat{y}$ such that \eqref{eq:nlp_static} is feasible, the LICQ holds at all optimal solutions.

It is however possible that \eqref{eq:nlp_static} is infeasible. For this case we define the problem
\begin{equation}\tag{NLP-Feas($\hat{y}$)}\label{eq_prob_feas}
    \begin{array}{rl}
        \displaystyle \min_{r, x} & r \\
        \text{s.t.} & g_i(x, \hat{y}) \leq r \text{ for } i \in [p],\\
        & Ax+B\hat{y} \leq b, \\
        & x \in \R^n \text{, } r \in \R.
    \end{array}
\end{equation}
The feasibility problem finds the point $x$ for which $(x, \hat{y})$ minimizes the maximal constraint violation.

Lastly, we define the third subproblem for which the feasible set is a linear relaxation of $\F$, meaning that any feasible point for \eqref{eq:minlp_static} will be feasible in the subproblem, thus forming a relaxation. Let $$\X = \{(x^1, y^1), (x^2, y^2), \ldots, (x^l, y^l)\} \subset \R^n \times \Z^m$$ be a finite set of points. We refer to $\X$ as the set of linearization points. We refer to the linearization of $g_j$ at a point $(\bar{x}, \bar{y})$ as $$\tilde{g}_j^{(\bar{x}, \bar{y})}(x,y) := g_j(\bar{x}, \bar{y}) + \nabla g_j(\bar{x}, \bar{y})^\top \begin{pmatrix} x - \bar{x} \\ y - \bar{y} \end{pmatrix}.$$ Now define $\Omega(\X)$ as
\begin{equation*}
    \Omega(\X) := \Bigg\{(x,y) \in \R^n \times \Z^m \Bigg\vert 
    \begin{array}{rl}
        \tilde{g}_j^{(x^i, y^i)}(x,y) \leq 0, & \forall (x^i, y^i) \in \X, \forall j \in [k]\\
        Ax + By \leq b & \\
    \end{array}
    \Bigg\}.
\end{equation*}

Since all functions $g_j$ are convex, $\Omega(\X)$ defines a linear relaxation of $\F$. Finally, we define the problem
\begin{equation}\tag{MILP($\X$)}\label{eq:milp_static}
    \begin{array}{rl}
        \displaystyle \min_{x, y} & c^\top x + d^\top y \\
        \text{s.t.} & (x, y) \in \Omega(\X). \\
    \end{array}
\end{equation}
This problem is a relaxation of \eqref{eq:minlp_static} meaning that the minimal value is a lower bound for the minimal value of \eqref{eq:minlp_static}. The subproblem \eqref{eq:milp_static} is a simpler problem compared to \eqref{eq:minlp_static} as it is a MILP.

Using these subproblems we can define the OA algorithm from \cite{fletcher_1994} in Algorithm \ref{algo:OA}.
\begin{algorithm}[t]
    \caption{Outer Approximation}\label{algo:OA}
    \begin{algorithmic}
        \Require $y^0 \in \Z^m$, $\X^0 \subset \R^n \times \Z^m$ and $\varepsilon \geq 0$ are given. If $\X^0$ is not given, set $\X^0 = \varnothing$
        \State \textbf{Set: } $LB = -\infty$, $UB = \infty$ and $k = 0$.
        \Repeat
        \If{(NLP($y^k$)) is feasible}
            \State Solve (NLP($y^k$)) $\rightarrow$ optimal solution $x^k$ and optimal value $f^k$.
            \If{$f^k < UB$}
                \State $x^* \gets x^k$, $y^* \gets y^k$ and $UB \gets f^k$.
            \EndIf
        \Else
            \State Solve (NLP-Feas($y^k$)) $\rightarrow$ optimal solution $x^k$.
        \EndIf
        \State $\X^{k+1} \gets \X^{k} \cup \{(x^k, y^k)\}$.
        \State $k \gets k + 1$.
        \State Solve (MILP($\X^{k}$)) $\rightarrow$ optimal solution $(\hat{x}, y^k)$ and optimal value $LB$.
        \Until{$UB - LB \leq \varepsilon$}
        \State \Return Optimal solution ($x^*$, $y^*$) and the linearization points $\X^k$
    \end{algorithmic}
\end{algorithm}
The algorithm terminates when the optimality gap, i.e., the difference between the upper and the lower bound of the optimal value, is small enough to conclude optimality. In theory, our assumptions yield that it is guaranteed that the algorithm will, for $\varepsilon = 0$, terminate with $UB = LB$ after a finite number of iterations (see Theorem 2 in \cite{fletcher_1994}). The original algorithm in \cite{fletcher_1994} adds an upper bound on the objective function in the MILP subproblem and optimality is concluded when the MILP subproblem is infeasible. This is, however, not needed in this setting where all functions are once continuously differentiable.

\subsection{Illustrative example for warm-starting}\label{sec:example_warm-starting}
The most common way of doing warm-starting is to initialize the algorithm at a point that is close to the optimal solution for the last parameter value. In the setting of a parameterized problem, we consider that the parameterized problem has been solved for some specific parameter value. If the parameter is then slightly changed, it is possible to use the optimal solution found for the original parameter value and use that as the starting point for the new parameter value. Other nearby points would also be considered good starting points.

The idea of using an optimal solution or a near optimal point for warm-starting works well in many settings, most noticeably in the simplex method and in continuous optimization. However, for convex MINLP this simple approach is not sufficient. The following example illustrates why using the optimal solution might not lead to a good warm-starting approach for the OA algorithm. The example shows that even when starting at the optimal solution, several iterations are needed to conclude optimality. Furthermore, there is no gain compared to a common way of initializing the OA algorithm using the continuous relaxation.

\begin{example}\label{ex:warm-starting}
Consider the problem
\begin{equation}\label{eq:worst-case-warm-starting}
    \begin{array}{rl}
        \displaystyle \min_{x, y} & -2x-y \\
        \text{s.t.} & 3x^2+2y^2-2xy+3x-4y \leq 5.3, \\
        & -10x + y \leq 4, \\
        & x, y \in [-2, 10], \\
        & x \in \R, y \in \Z.
    \end{array}
\end{equation}

\begin{figure}[ht]
    \centering
    \begin{tikzpicture}
        \begin{axis}[
        width = 0.8\textwidth,
        xlabel = $x$,
        ylabel = $y$,
        xmin = -4.5,
        xmax = 4.5,
        ymin = -2.5,
        ymax = 9.5,
        ytick = {-2, -1, 0, 1, 2, 3, 4, 5, 6, 7, 8, 9},
        xtick = {-4, -2, 0, 2, 4},
        axis equal image,
        ylabel style={rotate=-90},
        ymajorgrids=true,
        grid style=dashed,
        set layers = Bowpark,
        colormap = {bw}{color = (black) color = (black)}
        ]

        \draw[black] (axis cs:-0.2,0.9) ellipse[
            rotate = 58.28,
            x radius = 2.31402,
            y radius = 1.43014
            ];
    
        % Linear constraint
        \addplot[domain = (-2:2),
                samples = 50,
                color = black
                ]
                {4 + 10*x};
        
        % Cut 1
        \addplot[domain = -2.5:2.5,
                samples = 50,
                color = blue
                ]
                {(1/0.49)*(7.04-4.02*x)};

        % Point 1
        \addplot[mark = *,
                point meta = explicit symbolic,
                nodes near coords,
                color = blue]
                coordinates {(1.51, 2)} node[right] {$(x^*, y^*)$};

        % Points for cut 2
        \addplot[mark = *,
                point meta = explicit symbolic,
                color = blue,
                nodes near coords]
                coordinates {(0.65, 9)} node[below right] {$(\hat{x}, \hat{y})$};

        % Cont. relxation point
        \addplot[mark = *,
                point meta = explicit symbolic,
                nodes near coords,
                color = red]
                coordinates{(1.37, 2.47)} node[above right] {$(\bar{x}, \bar{y})$};

        % Cut cont. relxation
        \addplot[domain = -2.5:4.5,
                samples = 10,
                color = red]
                {(16.37 - 6.28*x)/3.14};

        % Feasible set of optimization problem
        \addplot[domain = 0.370901:0.629099,
                samples = 50,
                color = green]
                {3};
        \addplot[domain = -0.2:1.50624,
                samples = 50,
                color = green]
                {2};
        \addplot[domain = -0.3:1.40213,
                samples = 50,
                color = green]
                {1};
        \addplot[domain = -0.4:0.92009,
                samples = 50,
                color = green]
                {0};
        \addplot[domain = -0.5:-0.15428,
                samples = 50,
                color = green]
                {-1};  
        \end{axis}
    \end{tikzpicture}

    \caption{Illustration of Example \ref{ex:warm-starting}. The black line and the black ellipse correspond to the constraints. The green lines correspond to the feasible set. The blue line is the linearization at the optimal solution. The red line is the linearization at the optimal solution of the continuous relaxation of \eqref{eq:worst-case-warm-starting}.}
    \label{fig:worst-case-warm-starting}
\end{figure}

Figure \ref{fig:worst-case-warm-starting} illustrates the optimization problem \eqref{eq:worst-case-warm-starting}. Since $y$ is restricted to $\Z$, horizontal grey dashed lines are included to illustrate the integrality of $y$. The line and the ellipse in black correspond to the two constraints, meaning that the feasible set of the problem corresponds to the green lines.

The optimal solution of \eqref{eq:worst-case-warm-starting} is $x^* \approx 1.5$, $y^* = 2$. If the OA algorithm starts with $y^0 = y^* = 2$, the problem (NLP($\hat{y} = 2$)) is solved with an optimal solution $x^*$. Then $\X^0 = \{(x^*, y^*)\}$, meaning that the nonlinear constraint is linearized at $(x^*, y^*)$. This produces the cut shown by the blue line. The feasible set of (MILP($\X^0$)) is now all grey dashed lines inside the triangle created by the black and blue lines. If we solve (MILP($\X^0$)), the optimal solution is $\hat{x} \approx 0.65, \hat{y} =9$. This point is far from the optimal solution. In fact, altogether four iterations are needed to prove that $(x^*, y^*)$ is indeed optimal. Similar observations were made by Fletcher and Leyffer \cite{fletcher_1994}. They constructed a worst-case example which starts at the integer point adjacent to the optimal solution, but still has to visit all feasible integer points to converge.

We now compare this to a common way of initializing the OA algorithm. This method is to solve the continuous relaxation and use the optimal solution to obtain an initial cut. The next step is to solve the MILP subproblem to obtain a point that respects the integrality constraint. Applying this to our example, we start by solving the continuous relaxation, that is solving \eqref{eq:worst-case-warm-starting} but letting $y$ belong to $\R$, to obtain $\bar{x} \approx 1.37, \bar{y} \approx 2.47$. Even though $\bar{y} \notin \Z$, we can linearize the nonlinear constraint in $(\bar{x}, \bar{y})$ to get the red line in Figure \ref{fig:worst-case-warm-starting}. The added cut is now parallel to the level set of the objective function meaning that any integer feasible point on the line is optimal to the MILP subproblem. The solution process of the outer approximation will therefore depend on which optimal solution is found for the MILP subproblem. In the best case scenario, only the integer assignments $y = 2$ and $y = 3$ are needed to converge, corresponding to two iterations of the OA algorithm, while in the worst case scenario, 5 iterations are needed for convergence.
\end{example}

The conclusion of Example \ref{ex:warm-starting} is that in this case, warm-starting with an optimal solution has no benefit over a common way of initializing the OA algorithm which uses no prior knowledge of the problem. This example illustrates the need for more advanced warm-starting techniques to improve performance, since more information than the optimal solution is needed to obtain stable warm-starting techniques. This observation is also supported by the numerical experiments conducted.

\section{Theoretical justifications}\label{sec:prob_form}
Remember that we are interested in problems of the form \eqref{eq:p_lambda} where some of the constraints are parameterized by $\lambda$. For any fixed value of $\lambda$, this problem is of the same form as \eqref{eq:minlp_static} with the only difference that the nonlinear constraints are split into two functions.

By using the same definitions of the subproblems as in Section \ref{sec:background_oa_static} with the only addition that we include the value of the parameter, we get the following definitions.
\begin{definition}
    The notation $\F_\lambda$, (NLP($\hat{y}$, $\lambda$), (NLP-Feas($\hat{y}$, $\lambda$)), $\Omega(\X, \lambda)$ and (MILP$(\X, \lambda)$) refers to the defined sets and subproblems in Section \ref{sec:background_oa_static} for some specific value of $\lambda$.
\end{definition}

Using these definitions, we will develop some properties that are useful for understanding the OA algorithm in this setting and we use these properties to perform warm-starting. Since the main idea of our warm-starting framework is to reuse the set $\Omega(\X, \lambda)$ for the next $\lambda$ in the sequence we will first develop some properties of this set. After that we will consider how these properties affect the convergence of the OA algorithm since we hope to reduce the number of iterations needed to converge.

\subsection{Properties}\label{sec:theory-properties}
We first present an important property of the outer approximation framework that follows directly from convexity. Independent of the choice of  points in $\X$ and $\lambda$, the set defined by $\Omega(\X, \lambda)$ will be a linear relaxation of $\F_{\lambda}$. The interpretation of this fact is that there is nothing inherent in the way the OA algorithm constructs $\X$ that makes the cuts valid. We formalize this in Proposition \ref{prop:valid_oa}.
\begin{proposition}\label{prop:valid_oa}
    For any set of points $\X \subset \R^n \times \Z^m$ and $\lambda \in P \subset \R^k$ it holds that $\Omega(\X, \lambda)$ is a linear relaxation of $\F_\lambda$.
\end{proposition}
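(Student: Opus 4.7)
The plan is to prove the containment $\F_\lambda \subseteq \Omega(\X, \lambda)$ pointwise: take any feasible point and check every inequality in the definition of $\Omega(\X, \lambda)$. Since $\Omega(\X, \lambda)$ is by construction defined by finitely many linear inequalities (gradient cuts at points of $\X$ plus the original linear constraints $Ax + By \leq b$), once the containment is established the outer-approximation claim follows immediately from the two definitions given in Section~\ref{sec:background_minlp}.

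The key steps I would carry out are: First fix an arbitrary $(\bar x, \bar y) \in \F_\lambda$. The linear inequality $A\bar x + B\bar y \leq b$ holds because it is part of the definition of $\F_\lambda$, so that block of the definition of $\Omega(\X,\lambda)$ is satisfied at no cost. Second, for each linearization point $(x^i, y^i) \in \X$ and each component $j$ of the nonlinear constraint function (both $g^\lambda_j$ and $h_j$, treated identically since both are convex and continuously differentiable by assumption), invoke the convexity inequality recalled just before Proposition~\ref{prop:gradient_cut}:
\begin{equation*}
g^\lambda_j(x^i, y^i) + \nabla g^\lambda_j(x^i, y^i)^\top \bigl((\bar x, \bar y) - (x^i, y^i)\bigr) \leq g^\lambda_j(\bar x, \bar y).
\end{equation*}
Third, use that $(\bar x, \bar y) \in \F_\lambda$ implies $g^\lambda_j(\bar x, \bar y) \leq 0$, so the left-hand side, which is exactly $\tilde g_j^{\lambda,(x^i,y^i)}(\bar x, \bar y)$, is $\leq 0$. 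The same argument applies to every $h_j$. Since $(x^i, y^i) \in \X$ and $j$ were arbitrary, every defining inequality of $\Omega(\X, \lambda)$ is satisfied at $(\bar x, \bar y)$, giving $(\bar x, \bar y) \in \Omega(\X, \lambda)$.

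There is essentially no main obstacle: the result is a direct corollary of Proposition~\ref{prop:gradient_cut} applied component-wise. The one subtlety worth mentioning is that $\X$ is an arbitrary finite set of points in $\R^n \times \Z^m$, with no assumption that its elements are feasible or even close to $\F_\lambda$. This does not cause any trouble, because the convexity inequality above is a global statement valid at every pair of points in $\R^{n+m}$; in particular, the sign of $g^\lambda_j(x^i, y^i)$ is irrelevant to the validity of the cut, and feasibility is used only at the point $(\bar x, \bar y)$ we are testing. This global character is precisely what makes the warm-starting idea of the paper viable, since it means cuts generated at previous parameter values remain valid as supports for $\F_\lambda$ in the convexity sense — only their tightness, not their validity, is affected.
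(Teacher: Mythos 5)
Your proposal is correct and follows essentially the same argument as the paper's proof: fix an arbitrary point of $\F_\lambda$, apply the gradient inequality for convex functions to show each linearization of $g^\lambda_j$ and $h_j$ is bounded above by the (nonpositive) function value at that point, and note the linear constraints carry over unchanged. Your added remark that the elements of $\X$ need not be feasible is a fair observation but does not change the substance of the argument.
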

\begin{proof}
    This follows from the convexity of $g^\lambda$ and $h$.
\end{proof}

For the forthcoming Theorem \ref{thm:stability}, we use results involving the tangent cone, the set of linearized feasible directions and necessary first order optimality conditions expressed using the tangent cone. For completeness, the definitions are included here.

We first express a necessary first order optimality conditions using the tangent cone. This adapts Theorem 12.3 in \cite[p. 325]{Nocedal_Wright_2006} for (NLP($\hat{y}$, $\lambda$)).
\begin{proposition}\label{prop:1st_order_opt}
    If $x^*$ is a locally optimal solution of (NLP($\hat{y}$, $\lambda$)), then
    \begin{equation*}
        c^\top d \geq 0 \text{, for all } d \in \T_{\hat{y}, \lambda}(x^*)   
    \end{equation*}
    where $\T_{\hat{y}, \lambda}(x^*)$ is the tangent cone of the feasible set of (NLP($\hat{y}$, $\lambda$)) in $x^*$.
\end{proposition}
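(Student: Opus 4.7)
The plan is to invoke the sequential definition of the tangent cone and combine it with the local optimality of $x^*$; this is precisely the textbook argument behind Theorem~12.3 in \cite{Nocedal_Wright_2006}, specialized to (NLP($\hat{y}$, $\lambda$)). Since $y$ is fixed to $\hat{y}$, the objective as a function of $x$ alone reduces to $c^\top x + d^\top \hat{y}$, so only $c$ contributes to its gradient in $x$. One preliminary point worth flagging is that the symbol $d$ in the statement of the proposition plays a double role: the $d$ inside $c^\top d$ is a tangent direction and should not be confused with the cost coefficient vector from the original objective $c^\top x + d^\top y$.

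First I would fix an arbitrary tangent direction $d \in \T_{\hat{y},\lambda}(x^*)$. By the standard sequential definition of the tangent cone to a set $C$ at a point $p$, there exist a sequence $\{x_k\} \subset C$ with $x_k \to p$ and positive scalars $t_k \downarrow 0$ such that $(x_k - p)/t_k \to d$. I would apply this with $C$ equal to the feasible set of (NLP($\hat{y}$, $\lambda$)) and $p = x^*$ to obtain a feasible sequence $\{x_k\}$ approaching $x^*$ along the direction $d$.

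Second, since $x^*$ is a local minimum of (NLP($\hat{y}$, $\lambda$)) and $x_k \to x^*$ with each $x_k$ feasible, for all $k$ sufficiently large it must hold that $c^\top x_k + d^\top \hat{y} \geq c^\top x^* + d^\top \hat{y}$, i.e., $c^\top (x_k - x^*) \geq 0$. Dividing by $t_k > 0$ preserves the inequality, and letting $k \to \infty$ while using continuity of the linear functional $v \mapsto c^\top v$ yields $c^\top d \geq 0$. Since $d$ was arbitrary in $\T_{\hat{y},\lambda}(x^*)$, the claim follows.

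The only thing that really needs care here is being explicit about the notational overloading of $d$; the analytical content is entirely standard and uses only the definition of the tangent cone together with the definition of a local minimum. In particular, no constraint qualification enters at this step --- the LICQ assumption introduced earlier would only be needed subsequently, if one wished to pass from the tangent cone condition to concrete KKT multipliers via the linearized feasible directions cone. Thus I do not anticipate a genuine obstacle: the proof should read as a short three-line application of the tangent cone definition followed by a limit argument.
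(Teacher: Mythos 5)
Your argument is correct and is exactly the standard proof of Theorem~12.3 in Nocedal--Wright, which is all the paper does here: it states the proposition as an adaptation of that theorem and supplies no further argument, the objective being linear in $x$ once $\hat{y}$ is fixed. Your remark about the overloaded symbol $d$ (tangent direction versus objective coefficient vector) is a fair observation about the statement's notation, and your handling of it is fine.
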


Now consider the definition of the cone of linearized feasible directions adapted from \cite[p. 316]{Nocedal_Wright_2006}.
\begin{definition}\label{def:linearized_set}
    Define first the following index sets:
    \begin{align*}
        I^\lambda(x, y) & := \{j \in [p] \mid g^\lambda_j(x, y) = 0 \} \\
        J(x, y) & := \{j \in [q] \mid h_j(x, y) = 0 \} \\
        K(x, y) & := \{j \in [r] \mid (a^j)^\top x + (b^j)^\top y = 0 \}
    \end{align*}
    where $a^j$ and $b^j$ are the $j$th rows of $A$ and $B$ respectively expressed as column vectors.
    
    The cone of linearized feasible directions $d$ at the feasible point $x$ for (NLP($\hat{y}, \lambda$)) is
    \begin{equation*}
        \mathcal{L}_{\hat{y}, \lambda}(x) = \Bigg\{ d\in\R^{n}\ \Bigg|\ 
        \begin{array}{rl}
            d^\top \nabla_x g^\lambda_j(x, \hat{y}) \leq 0 & \text{for all } j \in I^\lambda(x, \hat{y}) \\
            d^\top \nabla_x h_j(x, \hat{y}) \leq 0 & \text{for all } j \in J(x, \hat{y}) \\
            d^\top a^j \leq 0 & \text{for all } j \in K(x, \hat{y})
        \end{array}
        \Bigg\}
    \end{equation*} 
\end{definition}

\begin{remark}\label{remark:licq}
    Since we assume that the LICQ holds for (NLP($\hat{y}$, $\lambda$)) for any optimal solution $x$, we know that $\T_{\hat{y}, \lambda}(x) = \mathcal{L}_{\hat{y}, \lambda}(x)$ (see \cite[p. 323]{Nocedal_Wright_2006}).
\end{remark}

In the framework of warm-starting for parameterized MINLP that we use, reusing the set $\Omega(\X, \lambda)$ is a key component. We therefore want to understand what the relation is between the optimal solutions of \eqref{eq:p_lambda} and (MILP($\X$, $\lambda$)) when $\lambda$ changes. Specifically, consider the situation where ($P_{\lambda^0}$) has been solved with $\Omega(\X, \lambda^0)$ as the linear relaxation. If we now pick $\lambda^1 \in B(\lambda^0, \delta) \cap P$ for some small $\delta > 0$, will the solution of (MILP($\X$, $\lambda^1$)) directly give the solution for ($P_{\lambda^1}$) meaning that only one iteration of the OA algorithm is needed? This is not true in general as we show with Example \ref{ex:int_jump}. However, in some situations, depending on the optimal solution of (MILP($\X, \lambda^1$)), we do obtain the optimal solution of ($P_{\lambda^1}$) in one iteration, which is shown in Theorem \ref{thm:stability}. Furthermore, a sufficient condition for when this happens is given in Theorem \ref{thm:lambda_stable}.

\begin{example}\label{ex:int_jump}
    Consider, for $\lambda \geq 0$, the problem
    \begin{equation}\label{eq:int_jump}
        \begin{array}{rl}
            \displaystyle \min_{x, y} & -x \\
            \text{s.t.} & x^2 + (y-1)^2 \leq 1 + \lambda, \\
            & x^2/4 + 4y^2 \leq 1, \\
            & 3x + y \leq 3, \\
            & x, y \in [-2, 2], \\
            & x \in \R, y \in \Z.
        \end{array}
    \end{equation}

    \begin{figure}[ht]
        \centering
            \begin{subfigure}[b]{\textwidth}
                \centering
                \begin{tikzpicture}
                    \begin{axis}[
                    xmin = -2,
                    xmax = 2,
                    ymin = -1,
                    ymax = 1,
                    xtick = {-2, -1, 0, 1, 2},
                    ytick = {-1, 0, 1},
                    ymajorgrids=true,
                    grid style=dashed,
                    axis equal image,
                    width = 0.8\textwidth,
                    set layers = Bowpark,
                    colormap = {bb}{color = (black) color = (black)},
                    ]

                    % Constraint 1
                    \addplot[domain = (2/3):(4/3),
                            samples = 10,
                            color = black
                    ]
                    {3-3*x};

                    % Cut 1
                    \addplot[domain = -1:2,
                            samples = 20,
                            color = blue]
                    {(2/3)*x - (4/9)};

                    % Plot circle
                    \draw[black] (axis cs:0,1) circle[radius = sqrt(1+(4/9))];

                    % Draw ellipse
                    \draw[black] (axis cs:0,0) ellipse[
                    x radius = 2,
                    y radius = 0.5
                    ];
                
                    \addplot[mark = *,
                            point meta = explicit symbolic,
                            nodes near coords]
                            coordinates {(2/3, 0)} node[above left] {$(2/3, 0)$};

                    \addplot[domain = -2/3:2/3,
                            samples = 10,
                            color = green]
                            {0};

                    \addplot[mark = *,
                            point meta = explicit symbolic,
                            nodes near coords]
                            coordinates {(2/3, 1)} node[below left] {$(2/3, 1)$};
                
                    \end{axis}
                \end{tikzpicture}
                \caption{Illustration of \eqref{eq:int_jump} with $\lambda^0 = 4/9$ and cut added in $(2/3, 0)$.}
                \label{fig:int_jump_t0}
            \end{subfigure}
            
            \begin{subfigure}[b]{\textwidth}
                \centering
                \begin{tikzpicture}
                    \begin{axis}[
                    xmin = -2,
                    xmax = 2,
                    ymin = -1,
                    ymax = 1,
                    xtick = {-2, -1, 0, 1, 2},
                    ytick = {-1, 0, 1},
                    ymajorgrids=true,
                    grid style=dashed,
                    axis equal image,
                    width = 0.8\textwidth,
                    set layers = Bowpark,
                    colormap = {bb}{color = (black) color = (black)}
                    ]

                    % Constraint 1
                    \addplot[domain = (2/3):(4/3),
                            samples = 10,
                            color = black,
                    ]
                    {3-3*x};

                    % Cut 1
                    \addplot[domain = -2:2,
                            samples = 10,
                            color = blue]
                            {0.32*x-0.1};

                    % Cut 2
                    \addplot[domain = -2:2,
                            samples = 20,
                            color = blue]
                            {(2/3)*x - (2/9) - 0.05};

                    % Circle
                    \draw[black] (axis cs:0,1) circle[radius = sqrt(1.1)];

                    % Ellipse
                    \draw[black] (axis cs:0,0) ellipse[
                    x radius = 2,
                    y radius = 0.5
                    ];
                
                    \addplot[mark = *,
                            point meta = explicit symbolic,
                            nodes near coords]
                            coordinates {(2/3, 0)} node[below] {$(2/3, 0)$};

                    \addplot[mark = *,
                            point meta = explicit symbolic,
                            nodes near coords]
                            coordinates {(sqrt(0.1), 0)} node[above left] {$(\sqrt{0.1}, 0)$};

                    \addplot[mark = *,
                            point meta = explicit symbolic,
                            nodes near coords]
                            coordinates {(2/3, 1)} node[below left] {$(2/3, 1)$};

                    \addplot[domain = -0.316:0.316,
                            samples = 10,
                            color = green]
                            {0};
                    \end{axis}
                \end{tikzpicture}
                \caption{Illustration of \eqref{eq:int_jump} with $\lambda^1 = 0.1$ and cut added in $(2/3, 0)$ and $(\sqrt{0.1}, 0)$.}
                \label{fig:int_jump_t1}
            \end{subfigure}
        \caption{Illustration of Example \ref{ex:int_jump}. The black lines and curves correspond to the constraints. The green lines correspond to the feasible set. The blue lines correspond to the linearizations.}
        \label{fig:int_jump}
    \end{figure}

    Problem \eqref{eq:int_jump} is plotted in Figure \ref{fig:int_jump} where \ref{fig:int_jump_t0} and \ref{fig:int_jump_t1} correspond to $\lambda^0 = 4/9$ and $\lambda^1 = 0.1$ respectively. Note that $y = 0$ is the only feasible integer assignment plotted as the green line.
    
    Suppose that we want to solve the problem first for $\lambda^0 = 4/9$ and then $\lambda^1 = 0.1$. Furthermore, assume that we start the OA algorithm with $\hat{y} = 0$. The optimal solution of (NLP($\hat{y} = 0, \lambda^0 = 4/9$)) is then $x = 2/3$. Therefore $\X^0 = \{(2/3,0)\}$, meaning that $(x, y) = (2/3, 0)$ is the first linearization point. The linearization is plotted as the blue line in Figure \ref{fig:int_jump_t0}. There is only one linearization in the plot since the linearization obtained from the ellipse is $x \leq 30/9$. We also get that $UB = -2/3$ since the minimal value of the NLP subproblem is an upper bound of the minimal value of the MINLP. The solution of (MILP($\X^0, \lambda^0 = 4/9$)) is then $(2/3,0)$ meaning that $LB = -2/3$ since the minimal value of the MILP subproblem is a lower bound of the minimal value of the MINLP. Hence $LB = UB$ and the OA algorithm has converged with the optimal solution $(x, y) = (2/3, 0)$.
    
    We now want to solve \eqref{eq:int_jump} with $\lambda^1 = 0.1$. We again start with $\hat{y} = 0$. Similarly as before we obtain the point $(x, y) = (\sqrt{0.1}, 0)$ by solving the corresponding NLP subproblem with $UB = -\sqrt{0.1}$. If we then reuse the set of linearization points from $\lambda^0$ and add the point $(\sqrt{0.1}, 0)$ we get $\X^1 = \{(2/3, 0), (\sqrt{0.1}, 0)\}$. The linearizations are plotted in blue in Figure \ref{fig:int_jump_t1}. The optimal solution of the MILP subproblem is now $(x, y) = (2/3, 1)$ showing that the integer variable has changed to another value even when $y = 0$ is the optimal solution for both ($P_{\lambda^0}$) and ($P_{\lambda^1}$). The same effect is observed for any $\lambda^1 \in [0, \lambda^0)$.
\end{example}

Theorem \ref{thm:stability} and \ref{thm:lambda_stable} are the main theoretical results of this paper. Theorem \ref{thm:stability} shows that, under certain conditions, only one iteration is needed for the OA algorithm to converge when warm-starting the algorithm. Theorem \ref{thm:lambda_stable} provides conditions for the problem to ensure convergence in one iteration.

\begin{theorem}\label{thm:stability}
    Let $\lambda^0, \lambda^1 \in P$. Suppose that $(x^0, y^0)$ is an optimal solution to ($P_{\lambda^0}$). Assume that it was found using the OA algorithm with $\Omega(\X^0, \lambda^0)$ as the linear relaxation, meaning that $(x^0, y^0) \in \X^0$. To solve ($P_{\lambda^1}$), assume that the OA algorithm starts with $y^0$ as the integer assignment and let $x^1$ be an optimal solution to (NLP($y^0, \lambda^1$)). Let $\X^1 = \X^0 \cup \{(x^1, y^0)\}$. If $(\hat{x}, y^0)$ is an optimal solution to (MILP($\X^1, \lambda^1$)), then the point $(x^1, y^0)$ is optimal to ($P_{\lambda^1}$). This means that the OA algorithm has converged.
\end{theorem}
\begin{proof}
    We will show that $(\hat{x}, y^0)$ must have the same objective value as $(x^1, y^0)$.
    
    Since $(x^1, y^0) \in \X^1$ we have for all $j \in [p]$ that
    \begin{equation*}
        g^{\lambda^1}(x^1, y^0) + \nabla g^{\lambda^1}(x^1, y^0)^\top
        \begin{pmatrix}
            \hat{x} - x^1 \\
            y^0 - y^0
        \end{pmatrix} \leq 0.
    \end{equation*}
    In particular, for all active constraints $g^{\lambda^1}_j(x^1, y^0) = 0$, i.e., for all $j \in I^{\lambda^1}(x^1, y^0)$, we have that
    \begin{equation*}
        \nabla_x g^{\lambda^1}_j (x^1, y^0)^\top (\hat{x} - x^1) \leq 0.
    \end{equation*}
    In the same manner, it holds that for all active constraints $h_j(x^1, y^0) = 0$, i.e., for all $j \in J(x^1, y^0)$, that
    \begin{equation*}
        \nabla_x h_j(x^1, y^0)^\top (\hat{x} - x^1) \leq 0.
    \end{equation*}
    Finally we have for each $j \in K(x^1, y^0)$ that
    \begin{equation*}
        \begin{cases}
            (a^j)^\top \hat{x} + (b^j)^\top y^0 & \leq b \\
            (a^j)^\top x^1 + (b^j)^\top y^0 & = b,
        \end{cases}
    \end{equation*}
    since $(\hat{x}, y^0)$ is feasible for (MILP($\X^1, \lambda^1$)). Subtracting the second equation from the first we get that $(a^j)^\top (\hat{x} - x^1) \leq 0$ for each $j \in K(x^1, y^0)$.

    It follows that $\hat{x} - x^1 \in \mathcal{L}_{y^0, \lambda^1}(x^1)$ (recall Definition \ref{def:linearized_set}). Since we assume that the LICQ holds for (NLP($y^0$, $\lambda^1$)) in $(x^1, y^0)$, it follows that $\mathcal{L}_{y^0, \lambda^1}(x^1) = \T_{y^0, \lambda^1}(x^1)$ (recall Remark \ref{remark:licq}). Since $x^1$ is an optimal solution to (NLP($y^0$, $\lambda^1$)), Proposition \ref{prop:1st_order_opt} implies that $c^\top (\hat{x} - x^1) \geq 0$. We therefore obtain that
    \begin{equation}\label{eq:nlp_ineq}
        \begin{pmatrix} c \\ d \end{pmatrix}^\top \begin{pmatrix} \hat{x} \\ y^0 \end{pmatrix} \geq \begin{pmatrix} c \\ d \end{pmatrix}^\top \begin{pmatrix} x^1 \\ y^0 \end{pmatrix}.
    \end{equation}
    
    We also know that $(x^1, y^0)$ is feasible for (MILP($\X^1$, $\lambda^1$)), since $(x^1, y^0)$ is feasible for ($P_{\lambda^1}$), meaning that
    \begin{equation}\label{eq:milp_ineq}
        \begin{pmatrix} c \\ d \end{pmatrix}^\top \begin{pmatrix} x^1 \\ y^0 \end{pmatrix} \geq \begin{pmatrix} c \\ d \end{pmatrix}^\top \begin{pmatrix} \hat{x} \\ y^0 \end{pmatrix},
    \end{equation}
    since we assumed that $(\hat{x}, y^0)$ is an optimal solution.

    Therefore, equation \eqref{eq:nlp_ineq} and \eqref{eq:milp_ineq} imply that the objective values of the points $(x^1, y^0)$ and $(\hat{x}, y^0)$ are equal. Since the point $(\hat{x}, y^0)$ is an optimal solution to (MILP($\X^1, \lambda^1$)), that optimal value is a lower bound of the optimal value of ($P_{\lambda^1}$). On the other hand, the point $(x^1, y^0)$ was constructed from the NLP subproblem, meaning that the optimal value of the subproblem is an upper bound of the optimal value of ($P_{\lambda^1}$) and that $(x^1, y^0)$ is feasible for ($P_{\lambda^1}$). Since they are equal, we draw the conclusion that $(x^1, y^0)$ is an optimal solution to ($P_{\lambda^1}$). If $(\hat{x}, y^0)$ is feasible for ($P_{\lambda^1}$), then it is also optimal but not necessarily with $\hat{x} = x^1$ as the minimizer is not necessarily unique.
\end{proof}

The discrete nature of MINLP problems complicates the analysis of how a change in $\lambda$ affects the optimal solution, i.e., a small change in $\lambda$ can result in a large distance between the optimal solutions as illustrated in Example \ref{ex:int_jump}. In Theorem \ref{thm:stability} it was proved that if the integer part of the solution of the MILP subproblem remains the same, then the OA algorithm will converge in one iteration. We now want to investigate what conditions need to hold for this situation to occur. To perform this analysis, we first introduce the concept of a parameter stable optimum.
\begin{definition}
 Let $\lambda \in P$ and $(x^*,y^*)$ be an optimal solution to ($P_{\lambda}$). We say that $(x^*,y^*)$ is a parameter stable optimum for ($P_{\lambda}$) if there exists $\delta > 0$ such that for all $\bar{\lambda} \in B(\lambda, \delta) \cap P$ there is an $x \in \R^n$ such that $(x, y^*)$ is an optimal solution to ($P_{\bar{\lambda}}$).
\end{definition}

We also define what we mean by an integer unique optimal solution.
\begin{definition}\label{def:integer_unique_opt}
  Let $\lambda \in P$ and $\X$ be a set of linearization points. Let $(x^*,y^*)$ be an optimal solution to MILP($\X, \lambda$). We say that $(x^*,y^*)$ is an integer unique optimal solution for MILP($\X, \lambda$) if there exists $\varepsilon > 0$ such that $$c^\top x + d^\top y > c^\top x^* + d^\top y^* + \varepsilon$$ for any feasible point $(x, y)$ where $y \neq y^*$.
\end{definition}

In Example \ref{ex:int_jump}, the point $(2/3, 0)$ was trivially a parameter stable optimum for ($P_{\lambda = 4/9}$) since $y = 0$ is the only feasible integer assignment.  On the other hand, the point $(2/3, 0)$ was not an integer unique optimal solution for (MILP($\X^0, \lambda^0$)) since the point $(2/3, 1)$ has the same objective function value while having a different integer part.

It should be noted that both of the concepts \textit{parameter stable optimum} and \textit{integer uniqueness} are defined in relation to the integer part of the optimal solution. For a parameter stable optimum this means that for a small change in $\lambda$ the continuous part can be adjusted such that $(\bar{x}, y^*)$ is an optimal solution. Similarly, for integer uniqueness, even if an optimal solution $(x^*, y^*)$ is integer unique there can be some $\bar{x} \neq x^*$ such that $(\bar{x}, y^*)$ is an optimal solution.

Lastly, we need a subset of the ball $B(x, \delta)$ for the formulation of Theorem \ref{thm:lambda_stable}.
\begin{definition}
    Let $x \in \R^n$ and $\delta > 0$, then $$H(x, \delta) := \{u \in \R^n \mid u \in B(x, \delta),\ u \leq x \}.$$
\end{definition}

We now have all the components needed to show that warm-starting can, in some settings, reduce the number of iterations the OA algorithm needs to converge to only one iteration. We assume in Theorem \ref{thm:lambda_stable} that the parameterized constraint is of the form $g^\lambda(x, y) = g(x, y) - F\lambda$ where $F$ is a non-negative matrix, which gives a certain structure to the change in the problems. Note that this structure is exactly what is obtained for scalarizations of multiobjective problems, e.g., in the case of the $\varepsilon$-constraint method where $F = I$. In Lemma \ref{lem:motone_omega} we show that if $g^\lambda$ has this form, the linear relaxations have a strict relationship enabling the analysis in Theorem \ref{thm:lambda_stable}. Note that the only difference between $\Omega(\X,\lambda^1)$ and $\Omega(\X,\lambda^0)$ is that $F(\lambda^0 - \lambda^1)$ is subtracted from the right-hand side of the gradient cuts in $\Omega(\X,\lambda^1)$. Thus, $\Omega(\X,\lambda^1)$ can be viewed as a reduction of $\Omega(\X,\lambda^0)$ where some of the half-planes defining the set have been moved inwards. We focus on the case when $\lambda^1 \leq \lambda^0$, but similar results for the opposite can be proven through similar steps.

\begin{lemma}\label{lem:motone_omega}
    Assume that $g^\lambda$ is of the form $g^\lambda(x, y) = g(x, y) - F\lambda$ where $F \in \R^{p \times k}$ is non-negative. Let $\lambda^0, \lambda^1 \in P$ be such that $\lambda^1 \leq \lambda^0$, then $\Omega(\X, \lambda^1) \subset \Omega(\X, \lambda^0)$.
\end{lemma}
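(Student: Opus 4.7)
The plan is a direct unpacking of the definitions. Take an arbitrary point $(x,y) \in \Omega(\X, \lambda^1)$ and verify that it satisfies each of the defining inequalities of $\Omega(\X, \lambda^0)$. The inequalities split into three groups: the gradient cuts from $g^\lambda$ at each $(x^i,y^i) \in \X$, the gradient cuts from $h$ at each $(x^i,y^i) \in \X$, and the linear constraints $Ax + By \leq b$. Only the first group depends on $\lambda$, so the $h$-cuts and the $Ax+By\leq b$ constraints are carried over for free.

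The key step is to analyze how a cut from $g^\lambda_j$ depends on $\lambda$ under the affine structure $g^\lambda(x,y) = g(x,y) - B\lambda$. Since $B\lambda$ is constant in $(x,y)$, the gradient $\nabla g^\lambda_j(x^i,y^i)$ is independent of $\lambda$ and equals $\nabla g_j(x^i,y^i)$. The cut at $(x^i,y^i)$ therefore reads
\begin{equation*}
    g_j(x^i,y^i) + \nabla g_j(x^i,y^i)^\top \begin{pmatrix} x - x^i \\ y - y^i \end{pmatrix} \leq (B\lambda)_j,
\end{equation*}
so that decreasing $\lambda$ componentwise only changes the right-hand side. Since $B$ has non-negative entries and $\lambda^1 \leq \lambda^0$, we obtain $B\lambda^1 \leq B\lambda^0$ componentwise, and in particular $(B\lambda^1)_j \leq (B\lambda^0)_j$ for each $j \in [p]$.

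Combining these observations, if $(x,y)$ satisfies the cut at $(x^i,y^i)$ for $\lambda^1$, i.e., the left-hand side above is at most $(B\lambda^1)_j$, then it is also at most $(B\lambda^0)_j$, so the same cut holds for $\lambda^0$. Doing this for every $j \in [p]$ and every $(x^i,y^i) \in \X$ shows that all $g^\lambda$-cuts of $\Omega(\X, \lambda^0)$ are satisfied, and together with the already-verified $h$-cuts and linear constraints this gives $(x,y) \in \Omega(\X,\lambda^0)$, establishing the claimed inclusion.

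There is no real obstacle here: the statement is an immediate monotonicity consequence of the fact that $-B\lambda$ enters the cuts only through a constant term, and the non-negativity of $B$ translates componentwise monotonicity in $\lambda$ into componentwise monotonicity of the cut right-hand sides. The only thing to be careful about is noting explicitly that $\nabla g^\lambda_j$ does not depend on $\lambda$, which is what allows the comparison of cuts at different parameter values to be made cut-by-cut rather than requiring any joint argument.
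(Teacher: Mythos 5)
Your proof is correct and follows essentially the same route as the paper: both arguments observe that $\nabla g^\lambda_j$ is independent of $\lambda$, move the term $B\lambda$ to the right-hand side of each gradient cut, and use $B\lambda^1 \leq B\lambda^0$ (from the non-negativity of $B$ and $\lambda^1 \leq \lambda^0$) to transfer the inequality from $\lambda^1$ to $\lambda^0$. Your explicit remark that the $h$-cuts and the linear constraints are unaffected by $\lambda$ is a small completeness point the paper leaves implicit.
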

\begin{proof}
    Let $f^j$ be the $j$th row of $F$ expressed as a column vector. Then, for each $(\bar{x}, \bar{y}) \in \X$ and $j \in [p]$, it holds that
    \begin{equation*}
        g_j^\lambda (\bar{x}, \bar{y}) + \nabla g_j^\lambda (\bar{x}, \bar{y})^\top \begin{pmatrix}
            x - \bar{x} \\ y - \bar{y}
        \end{pmatrix}
        = g_j (\bar{x}, \bar{y}) + \nabla g_j (\bar{x}, \bar{y})^\top \begin{pmatrix}
            x - \bar{x} \\ y - \bar{y} \end{pmatrix} - (f^j)^\top \lambda.
    \end{equation*}

    Let $(x, y) \in \Omega(\X, \lambda^1)$. We have that
    \begin{multline*}
        0 \geq g_j (\bar{x}, \bar{y}) + \nabla g_j (\bar{x}, \bar{y})^\top \begin{pmatrix}
            x - \bar{x} \\ y - \bar{y} \end{pmatrix} - (f^j)^\top \lambda^1 \\ \geq g_j (\bar{x}, \bar{y}) + \nabla g_j (\bar{x}, \bar{y})^\top \begin{pmatrix}
            x - \bar{x} \\ y - \bar{y} \end{pmatrix} - (f^j)^\top \lambda^0
    \end{multline*}
    since all elements of $F$ are non-negative. Therefore $(x, y) \in \Omega(\X, \lambda^0)$.
\end{proof}

\begin{theorem} \label{thm:lambda_stable}
Let $\lambda^0 \in P$. Assume that $(x^*,y^*)$ is a parameter stable optimal solution to ($P_{\lambda^0}$), and let $\X$ contain all the linearization points used to solve the problem by the OA algorithm. Furthermore, we assume that $(x^*,y^*)$ is an integer unique optimal solution to (MILP($\X$, $\lambda^0$)) and that $g^\lambda$ is of the form $g^\lambda(x, y) = g(x, y) - F \lambda$ where $F$ is a non-negative matrix of appropriate size. Then there exists a $\delta > 0$, such that for any $\lambda^1 \in H(\lambda^0, \delta) \cap P$, the OA algorithm warm-started with the linearization points $\X$ finds an optimal solution to ($P_{\lambda^1}$) and terminates after a single iteration of the OA algorithm.
\end{theorem}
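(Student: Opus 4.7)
The strategy is to apply Theorem \ref{thm:stability} with $(x^0, y^0) := (x^*, y^*)$ and warm-start linearization set $\X^0 := \X$; note that $(x^*, y^*) \in \X$ because Algorithm \ref{algo:OA} adds the current NLP optimizer to the linearization set at every iteration, and $(x^*,y^*)$ was itself identified through such an NLP solve. It therefore suffices to prove that when the warm-started OA runs its first iteration on $(P_{\lambda^1})$---solving (NLP($y^*, \lambda^1$)) for an optimizer $x^1$, setting $\X^1 := \X \cup \{(x^1, y^*)\}$, and then solving (MILP($\X^1, \lambda^1$))---every optimum of the MILP subproblem has integer part $y^*$. With this, Theorem \ref{thm:stability} immediately delivers optimality of $(x^1, y^*)$ for $(P_{\lambda^1})$ and termination of the OA algorithm in a single iteration.

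First, parameter stability yields a $\delta_1 > 0$ such that for every $\lambda^1 \in B(\lambda^0, \delta_1)$ the subproblem (NLP($y^*, \lambda^1$)) is feasible, and any optimizer $x^1$ makes $(x^1, y^*)$ optimal for $(P_{\lambda^1})$; in particular $(x^1, y^*) \in \Omega(\X^1, \lambda^1)$ by Proposition \ref{prop:valid_oa}, so the MILP optimal value is at most $c^\top x^1 + d^\top y^*$. Chaining the elementary ``more cuts shrink $\Omega$'' inclusion with Lemma \ref{lem:motone_omega} (applicable because $\lambda^1 \leq \lambda^0$ on the half-ball $H(\lambda^0, \delta)$) gives
\begin{equation*}
    \Omega(\X^1, \lambda^1) \subset \Omega(\X, \lambda^1) \subset \Omega(\X, \lambda^0).
\end{equation*}
Hence any feasible $(\hat x, \hat y)$ of (MILP($\X^1, \lambda^1$)) with $\hat y \neq y^*$ lies in $\Omega(\X, \lambda^0)$, so the integer uniqueness hypothesis yields $c^\top \hat x + d^\top \hat y > c^\top x^* + d^\top y^* + \varepsilon$ for the fixed $\varepsilon > 0$ from the hypothesis.

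The main obstacle is producing the matching upper bound $c^\top x^1 + d^\top y^* < c^\top x^* + d^\top y^* + \varepsilon$ for $\lambda^1$ sufficiently close to $\lambda^0$. The plan is to derive this from continuity of the optimal value of the parametric convex program (NLP($y^*, \lambda$)) at $\lambda = \lambda^0$: its value at $\lambda^0$ equals $c^\top x^* + d^\top y^*$ since $x^*$ solves it, the parameter enters only through the right-hand-side perturbation $g(x, y^*) \leq B\lambda$, the set $\FL$ is bounded, and parameter stability guarantees nonempty feasible sets near $\lambda^0$, so a Berge-type maximum theorem furnishes a $\delta_2 > 0$ giving the needed inequality on $B(\lambda^0, \delta_2)$. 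Setting $\delta := \min(\delta_1, \delta_2)$, the two strict inequalities combine to rule out $\hat y \neq y^*$ for any MILP optimizer whenever $\lambda^1 \in H(\lambda^0, \delta)$, and Theorem \ref{thm:stability} closes the argument.
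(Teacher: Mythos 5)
Your proposal is correct and follows essentially the same route as the paper's proof: parameter stability supplies $\delta_1$, integer uniqueness supplies the $\varepsilon$-gap on $\Omega(\X,\lambda^0)$, continuity of the value of (NLP($y^*,\lambda$)) supplies the matching upper bound, Lemma \ref{lem:motone_omega} transfers the gap to $\Omega(\X,\lambda^1)$, and Theorem \ref{thm:stability} closes the argument. Your explicit inclusion chain $\Omega(\X^1,\lambda^1)\subset\Omega(\X,\lambda^1)\subset\Omega(\X,\lambda^0)$ and the appeal to a Berge-type result are just slightly more explicit packagings of steps the paper also takes.
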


\begin{proof}
Since $(x^*,y^*)$ is a parameter stable optimal solution to ($P_{\lambda^0}$), we know that there exists $\delta_1>0$ such that for any $\lambda^1 \in B(\lambda^0, \delta_1) \cap P$ there exists some $x \in \R^n$ such that $(x, y^*)$ is an optimal solution to ($P_{\lambda^1}$). In particular, let $\lambda^1 \in H(\lambda^0, \delta_1) \cap P$. As $(x^*,y^*)$ is an integer unique optimal solution to (MILP($\X$, $\lambda^0$)), it holds that for any point $(x, y) \in \Omega(\X, \lambda^0)$ where $y \neq y^*$ that $c^\top x + d^\top y > c^\top x^* + d^\top y^* + \varepsilon$ for some $\varepsilon > 0$.

Now consider (NLP($y^*, \lambda^1$)) which is parameterized by $\lambda^1$ and is feasible for $\lambda^1 \in H(\lambda^0, \delta_1) \cap P$. As all functions defining the problem are continuously differentiable, the optimal value as a function of $\lambda$ is continuous (see Proposition 2 in \cite{fiacco_2008}). Therefore, we can choose $\delta_2 > 0$ sufficiently small such that the minimal value of (NLP($y^*$, $\lambda^1$)) is arbitrarily close to $c^\top x^* + d^\top y^*$ for any $\lambda^1 \in H(\lambda^0, \delta_2) \cap P$. We then select a $\delta$, satisfying $0 < \delta < \min\{\delta_1, \delta_2\}$, such that the minimal value of (NLP($y^*$, $\lambda^1$)) is strictly less than $c^\top x^* + d^\top y^* + \varepsilon$ for all $\lambda^1 \in H(\lambda^0, \delta) \cap P$.

Now let $\lambda^1 \in H(\lambda^0, \delta) \cap P$ and let $x^1$ be an optimal solution to NLP($y^*, \lambda^1$). From the construction of $\delta$ we have that $c^\top x^1 + d^\top y^* < c^\top x^* + d^\top y^* + \varepsilon$. By Lemma \ref{lem:motone_omega} it holds that $\Omega(\X,\lambda^1) \subset \Omega(\X,\lambda^0)$. We know that for every point $(x, y) \in \Omega(\X, \lambda^1) \subset \Omega(\X, \lambda^0)$ such that $y \neq y^*$ that $$c^\top x^1 + d^\top y^* < c^\top x^* + d^\top y^* + \varepsilon < c^\top x + d^\top y.$$ Therefore, since $(x^1, y^*) \in \F_{\lambda^1} \subset \Omega(\X,\lambda^1)$, the optimal solution of MILP($\X, \lambda^1$) is of the form $(x, y^*)$ for some $x$. Let $\X^1 = \X \cup \{(x^1, y^*)\}$. The optimal solution of MILP($\X^1, \lambda^1$) is therefore also of the form $(x, y^*)$ for some $x$ since $(x^1, y^*) \in \Omega(\X^1, \lambda^1)$. From Theorem \ref{thm:stability}, it follows that the OA algorithm would terminate after the first iteration with the upper bound equal to the lower bound.
\end{proof}

The assumption of an integer unique solution implies that no symmetry is present, which for the class of problems considered is a strong assumption. We argue, however, that similar results cannot be expected without this assumption. For example, if two points are optimal solutions then a good approximation of the feasible set needs to be obtained around both points before optimality can be proven. It is therefore unlikely that only one iteration of the OA algorithm can be sufficient to converge to an optimal solution.

Theorem~\ref{thm:lambda_stable} serves as an important motivation for the warm-starting strategies proposed in the next section, namely, in some circumstances, a simple warm-starting approach can make the OA algorithm converge in one iteration.

\section{Warm-starting algorithm}\label{sec:algorithm}
This section describes the general algorithm proposed to solve a parametrized MINLP using OA. It uses the OA algorithm to solve \eqref{eq:p_lambda} for a fixed value of $\lambda$ and then uses the information collected to warm-start \eqref{eq:p_lambda} for a new value of $\lambda$. The overall algorithm is described in Algorithm \ref{algo:param_OA}.

\begin{algorithm}[t]
    \caption{Parameterized MINLP}\label{algo:param_OA}
    \begin{algorithmic}
        \Require $\{\lambda^0, \lambda^1, \ldots, \lambda^K\} \subset \R^k$, and $\hat{y} \in \Z^m$ are given.
        \State Set $\X^* = \varnothing$. \Comment{Set of optimal solutions}
        \State Set $\X = \varnothing$. \Comment{Set of linearization points}
        \For{$k \in \{0, 1, \ldots, K\}$}
            \State Solve ($P_{\lambda^k}$) using Algorithm \ref{algo:OA} with $y^0 = \hat{y}$ and $\X^0 = \X$ and return $(x^*, y^*), \hat{\X}$ 
            \State $\X^* \gets \X^* \cup \{(x^*, y^*)\}$
            \State Determine $\X, \hat{y}$ from initialization rule \Comment{Use initialization rule 1, 2 or 3.}
        \EndFor
        \State \Return $\X^*$
    \end{algorithmic}
\end{algorithm}

As a baseline for comparison we use the common initialization of solving the continuous relaxation of \eqref{eq:p_lambda}. This method uses no prior knowledge and is therefore not a warm-starting technique. The initialization is specified in Subsection \ref{sec:relaxation}.

Three initialization rules are presented for warm-starting. The first rule is simply that we warm-start with the optimal solution from the last value of $\lambda$. We consider this the obvious way of warm-starting an algorithm and, therefore, do not regard this as a new concept in relation to existing literature. The second and third rules use properties of the OA algorithm and, as far as we are aware, have not been presented in the literature before. We call the rules cut-tightening and point-based. The cut-tightening rule focuses on updating the cuts with the new parameter value without changing the points of linearization. The point-based rule focuses on updating the points of linearization. The rules are presented in more detail in Subsection \ref{sec:rule0}, \ref{sec:rule1} and \ref{sec:rule2}.

\subsection{Baseline: Relaxation}\label{sec:relaxation}
A common method of initializing the OA algorithm is to solve the continuous relaxation. The solution given in that case is generally not integer feasible of course, but we can still linearize the nonlinear constraints at this point and add the cuts to the MILP subproblem. If the MILP subproblem obtained is then solved, we obtain an integer solution and can start the OA algorithm as usual. This is then used to start the algorithm for each $\lambda$, meaning that no prior knowledge is used. This will be used as the baseline to compare the other methods. The rule is described in Algorithm \ref{algo:relaxation}.

\begin{algorithm}
    \caption{Baseline rule}\label{algo:relaxation}
    \begin{algorithmic}
        \State Solve the continuous relaxation of \eqref{eq:p_lambda} and return $(\bar{x}, \bar{y})$.
        \State Set $\X = \{(\bar{x}, \bar{y})\}$.
        \State Solve (MILP($\X, \lambda$)) and return $\hat{y}$.
        \State \Return $\X$, $\hat{y}$
    \end{algorithmic}
\end{algorithm}

\subsection{Initialization rule 1: Restart for every parameter value}\label{sec:rule0}
The simplest method is to restart the OA algorithm for every value of $\lambda$. This is implemented by using the optimal solution for $\lambda^k$ as the starting point for the OA algorithm for the next value $\lambda^{k+1}$. This means that the initialization rule will be to set $\hat{y} = y^*$ and $\hat{\X} = \varnothing$. The rule is formalized in Algorithm \ref{algo:rule1}.

\begin{algorithm}
    \caption{Initialization rule 1}\label{algo:rule1}
    \begin{algorithmic}
        \Require $\hat{\X}$ and $y^*$ are given.
        \State \Return $\X = \varnothing$, $\hat{y} = y^*$
    \end{algorithmic}
\end{algorithm}

\subsection{Initialization rule 2: Cut tightening}\label{sec:rule1}
By Proposition \ref{prop:valid_oa} we know that any set of points $\X$ produces a linear relaxation of the feasible set of ($P_\lambda$) where the cuts depend on the parameter $\lambda$. A computationally cheap way to start the OA algorithm for a new parameter value is therefore to start with the same linearization points $\X$ but update the parameter $\lambda$. The initialization rule is therefore $\hat{y} = y^*$ and $\hat{\X} = \X^k$. It should be noted that even though the points of linearization are the same, the cuts are in general not the same since they depend on $\lambda$. The intuition behind this method is that as long as the change in the parameter is small, the change in the parameterized problem is also small and similar cuts should be needed to solve the problem. This is also confirmed by the theoretical results in Theorem \ref{thm:stability} and \ref{thm:lambda_stable} where it was proved that under some conditions a small enough change in $\lambda$ results in convergence in one iteration of the OA algorithm. Therefore, the same set of linearization points can approximate the new feasible set in a satisfactory way. The rule is formalized in Algorithm \ref{algo:rule2}.

\begin{algorithm}
    \caption{Initialization rule 2}\label{algo:rule2}
    \begin{algorithmic}
        \Require $\hat{\X}$ and $y^*$ are given.
        \State \Return $\X = \hat{\X}$, $\hat{y} = y^*$ 
    \end{algorithmic}
\end{algorithm}

\subsection{Initialization rule 3: Point based}\label{sec:rule2}
The cut tightening method is cheap to compute, but since the points are not adapted to the problem the cuts are in general not tight to the feasible set of \eqref{eq:p_lambda}. Furthermore, solving the MILP subproblems in order to find integer assignments is usually the bottleneck for the OA algorithm. This means that already having a list of candidate integer assignments could make it more efficient since we do not need to find integer assignments again while being able to construct tight cuts.

The point-based method attempts to use this observation. Since we assume that the change in the feasible set is small, it seems likely that the same integer assignments will be explored for the algorithm to converge. We therefore reuse the integer assignments that were already used earlier in the sequence of problems. In the OA algorithm, for a given integer combination $y$ we find the corresponding $x$ by solving (NLP($y, \lambda$)), or (NLP-Feas($y, \lambda$)) if $y$ is not feasible. For some value $\lambda^0$ the OA algorithm will return a set of points $\X^0 = \{(x^1, y^1), \ldots, (x^l, y^l)\}$. If the next value of the parameter is $\lambda^1$, we can for each $y^i$ solve (NLP($y^i, \lambda^1$)) or (NLP-Feas($y^i, \lambda^1$)) to get $\hat{x}^i$. We have then constructed the set of points $\X^1$. The initialization rule is then to use $\hat{\X} = \X^1$ and $\hat{y} = y^*$ which is formalized in Algorithm \ref{algo:rule3}.

This warm-starting will be more computationally demanding than the cut tightening method due to additional NLP problems. The hope is that the computations will be worthwhile if they result in a tighter initial outer approximation and reduce the number of iterations needed for the OA algorithm.

\begin{algorithm}
    \caption{Initialization rule 3}\label{algo:rule3}
    \begin{algorithmic}
        \Require $\hat{\X}, y^*$ and $\lambda^{k+1}$ are given.
        \State Set $\X = \varnothing$
        \For{$(x, y) \in \hat{\X}$}
            \If{(NLP($y, \lambda^{k+1}$)) is feasible}
                \State Solve (NLP($y, \lambda^{k+1}$)) with optimal solution $\hat{x}$
            \Else
                \State Solve (NLP-Feas($y, \lambda^{k+1}$)) with optimal solution $\hat{x}$
            \EndIf
            \State $\X \gets \X \cup \{(\hat{x}, y)\}$
        \EndFor
        \State \Return $\X$, $\hat{y} = y^*$
    \end{algorithmic}
\end{algorithm}

\subsection{Illustration of warm-starting rules}\label{sec:rule_illustration}
To illustrate the initialization rules, we will adapt Example \ref{ex:warm-starting}. Consider the case where \eqref{eq:worst-case-warm-starting} is solved with the OA algorithm resulting in linearizations at $y = 2$ and $y = 3$. This is sufficient to prove the optimality of the point $(x, y) = (1.51, 2)$. If the right-hand side of the nonlinear constraint is changed to 5 (from 5.3) we can use the warm-starting rules and compare the linear relaxation obtained. We call this problem ($\ref{eq:worst-case-warm-starting}'$).

The different linear relaxations are plotted in Figure \ref{fig:ex_warm_starting_baseline}. For the baseline rule, the continuous relaxation of ($\ref{eq:worst-case-warm-starting}'$) is solved and the red line is obtained by linearizing at the optimal point. For rule 1, $y = 2$ will be the initial integer part and the point $(1.47, 2)$ is obtained by solving ($\ref{eq:worst-case-warm-starting}'$) with $y$ fixed to 2. The linearization at this point is plotted in blue. For rule 2, the nonlinear constraint is linearized at the points $(1.51, 2)$ and $(0.63, 3)$ to obtain the linearizations plotted in green. Lastly, for rule 3, problem ($\ref{eq:worst-case-warm-starting}'$) is solved with $y$ fixed to 2 and to 3 since that corresponds to the integer parts for solving \eqref{eq:worst-case-warm-starting}. For $y = 2$, the same linearization as for rule 1 is obtained. For $y = 3$, the problem is no longer feasible, therefore the feasibility problem is solved to obtain $x = 0.5$. The nonlinear constraint is linearized at $(0.5, 3)$ to obtain the second linearization in yellow.

\begin{figure}[ht]
    \centering
    \begin{tikzpicture}
        \begin{axis}[
        width = 0.8\textwidth,
        xlabel = $x$,
        ylabel = $y$,
        xmin = -4.5,
        xmax = 4.5,
        ymin = -2.5,
        ymax = 9.5,
        ytick = {-2, -1, 0, 1, 2, 3, 4, 5, 6, 7, 8, 9},
        xtick = {-4, -2, 0, 2, 4},
        axis equal image,
        ylabel style={rotate=-90},
        ymajorgrids=true,
        grid style=dashed,
        set layers = Bowpark,
        colormap = {bw}{color = (black) color = (black)},
        legend pos = outer north east
        ]

        \draw[black] (axis cs:-0.2,0.9) ellipse[
            rotate = 58.28,
            x radius = 2.26662927362,
            y radius = 1.40085393099
            ];

        % Linear constraint
        \addplot[domain = (-2:2),
                samples = 50,
                color = black
                ]
                {4 + 10*x}; \addlegendentry{Constraints}

        % Baseline
        \addplot[domain = -2.5:4.5,
                samples = 10,
                color = red]
                {(7.87 - 3.08*x)/1.54}; \addlegendentry{Baseline}

        % Rule 1
        \addplot[domain = -2.5:2.5,
                samples = 50,
                color = blue
                ]
                {(6.8-3.91*x)/0.53}; \addlegendentry{Rule 1}

        % Rule 2
        \addplot[domain = -4.5:4.5,
                samples = 50,
                color = green
                ]
                {3 - (0.3007+0.78*(x-0.63))/6.74}; \addlegendentry{Rule 2}

        \addplot[domain = -2.5:2.5,
                samples = 50,
                color = green,
                forget plot
                ]
                {2 - (0.3303+8.06*(x-1.51))/0.98};

        % Rule 3
        \addplot[domain = -4.5:4.5,
                samples = 50,
                color = yellow
                ]
                {3 - (0.25/7}; \addlegendentry{Rule 3}

        \addplot[domain = -2.5:2.5,
                samples = 50,
                color = yellow,
                dashed
                ]
                {2 - (0.0127 + 7.82*(x-1.47))/1.06};
        \end{axis}
    \end{tikzpicture}

    \caption{Comparison of linear relaxations for the different warm-starting rules.}
    \label{fig:ex_warm_starting_baseline}
\end{figure}

For this simple illustrative problem, we observe that rule 2 and rule 3 result in quite similar relaxations. One can argue that rule 3 results in a slightly better relaxation as it excludes all $y \geq 3$, but keep in mind it comes at a computational cost.

\section{Numerical experiments}\label{sec:num_exp}
\subsection{Test problems}\label{sec:test_probs}
We briefly present the five test problems that have been used to investigate the potential of the warm-starting techniques described above. The test problems were chosen to cover different types of changes due to the $\lambda$ parameters, e.g., right-hand side perturbations of linear or nonlinear constraints and changing parameters in nonlinear expressions. First in Subsection \ref{sec:test_prob_biobj}, two biobjective problems are included for the multiobjective application. The structure of these problems is quite simple since the parameter appears linearly as a change to the right-hand side and does not affect the gradient of the constraints. In Subsection \ref{sec:test_probs_slr}, sparse linear regression is presented which is a useful application where the parameter has a more complex effect than the case of scalarization for multiobjective problems since the gradients also depend on the parameter. Lastly, a problem from Model Predictive Control (MPC) is presented in Subsection \ref{sec:test_probs_mpc}. In this problem the effect of the parameter is even more complex than in the other ones, and involves a vector-valued parameter.

\subsubsection{Biobjective problems}\label{sec:test_prob_biobj}
As mentioned in the introduction, solving multiobjective MINLP by scalarization techniques sparked the initial interest in parameterized MINLP. The methods have, therefore, been implemented for two biobjective problems. Since the problems are biobjective the parametrization only requires a scalar-valued parameter $\lambda$. The two biobjective problems \eqref{eq:ti4} and \eqref{eq:ti14} that have been considered have been adapted from \cite{Biobj_test_probs} and we refer to the paper for more details and plots of the image sets. The problems were chosen to be simple enough to analyze the correctness and details of the algorithm.

The first test problem for a biobjective problem is problem \eqref{eq:ti4} defined as
\begin{equation}\tag{TI4}\label{eq:ti4}
    \begin{array}{rl}
        \displaystyle \min_{x, y} & \begin{pmatrix}
            x_1 + x_3 + y_1 + y_3 \\
            x_2 + x_4 + y_2 + y_4
        \end{pmatrix} \\
        \text{s.t.} & x_1^2 + x_2^2 \leq 1, \\
                    & x_3^2 + x_4^2 \leq 1, \\
                    & (y_1 - 2)^2 + (y_2 - 5)^2 \leq 10, \\
                    & (y_3 - 3)^2 + (y_4 - 8)^2 \leq 10, \\
                    & x, y \in [-20, 20]^4, \\
                    & x \in \R^4, y \in \Z^4.
    \end{array}
\end{equation}
Problem \eqref{eq:ti4} was formulated for the computational method as
\begin{equation*}
    \begin{array}{rl}
        \displaystyle \min_{t, x, y} & t \\
        \text{s.t.} & x_1 + x_3 + y_1 + y_3 \leq t, \\
                    & x_2 + x_4 + y_2 + y_4 \leq \lambda, \\
                    & x_1^2 + x_2^2 \leq 1, \\
                    & x_3^2 + x_4^2 \leq 1, \\
                    & (y_1 - 2)^2 + (y_2 - 5)^2 \leq 10, \\
                    & (y_3 - 3)^2 + (y_4 - 8)^2 \leq 10, \\
                    & x, y \in [-20, 20]^4, \\
                    & t \in \R, x \in \R^4, y \in \Z^4.
    \end{array}
\end{equation*}
The epigraphical reformulation was used to simplify the computational implementation.

The second test problem is \eqref{eq:ti14}, defined as
\begin{equation}\tag{TI14}\label{eq:ti14}
    \begin{array}{rl}
        \displaystyle \min_{x, y} & \begin{pmatrix}
            x_1+x_3+y_1+\exp(y_3) - 1 \\
            x_2 + x_4 + y_2 + y_4
        \end{pmatrix} \\
        \text{s.t.} & x_1^2 + x_2^2 \leq 1, \\
                    & x_3^2 + x_4^2 \leq 1, \\
                    & (y_1-2)^2 + (y_2-5)^2 \leq 10, \\
                    & (y_3-3)^2 + (y_4-8)^2 \leq 10, \\
                    & x, y \in [-20, 20]^4, \\
                    & x \in \R^4, y \in \Z^4.
    \end{array}
\end{equation}
Similarly to \eqref{eq:ti4}, problem \eqref{eq:ti14} was formulated as
\begin{equation*}
    \begin{array}{rl}
        \displaystyle \min_{t, x, y} & t \\
        \text{s.t.} & x_1+x_3+y_1+\exp(y_3) - 1 \leq \lambda, \\
                    & x_2 + x_4 + y_2 + y_4 \leq t, \\
                    & x_1^2 + x_2^2 \leq 1, \\
                    & x_3^2 + x_4^2 \leq 1, \\
                    & (y_1-2)^2 + (y_2-5)^2 \leq 10, \\
                    & (y_3-3)^2 + (y_4-8)^2 \leq 10, \\
                    & x, y \in [-20, 20]^4, \\
                    & t \in \R, x \in \R^4, y \in \Z^4.
    \end{array}
\end{equation*}
Here, the linear objective function in the biobjective problem was used as the objective function for the scalarized formulation due to simpler implementation.

\subsubsection{Sparse linear regression}\label{sec:test_probs_slr}
In sparse linear regression, as presented in \cite{Bertsimas_Parys_2020}, we consider the problem
\begin{equation*}
    \begin{array}{rl}
        \displaystyle \min_{x} & \frac{1}{2} \norm{Ax-b}{2}^2 + \frac{\lambda}{2} \norm{x}{2}^2 \\
        \text{s.t.} & \norm{x}{0} \leq \kappa, \\
        & x \in \R^n.
    \end{array}
\end{equation*}
This is a typical linear regression with an $l_2$-regularization term with the added constraint that only $\kappa$ number of elements of $x$ are allowed to be non-zero. In applications, choosing the values for $\lambda$ or $\kappa$ is not trivial meaning that there is an interest in solving the problem for several values and comparing the results. This results in a parameterized MINLP.

To obtain the regression problem of the form \eqref{eq:p_lambda}, we introduce binary variables $y_i$. We assume that it is possible to define some lower bound $l_i$ and upper bound $u_i$ for each component $x_i$. The regression problem can then be formulated as
\begin{equation}\tag{SLR}\label{eq:slr}
    \begin{array}{rl}
        \displaystyle \min_{t, x, y} & t \\
        \text{s.t.} & \frac{1}{2} \norm{Ax-b}{2}^2 + \frac{\lambda}{2} \norm{x}{2}^2 \leq t, \\
        & l_i y_i \leq x_i \leq u_i y_i, \\
        & \sum_{i=1}^{n} y_i \leq \kappa, \\
        & t \in \R, x \in \R^n \text{, } y \in \{0, 1\}^n.
    \end{array}
\end{equation}

For this problem there are two parameters that can be varied: $\kappa$ and $\lambda$. Numerical results will be reported for fixing one of the parameters and varying the other. However, we hypothesize that the case of varying $\lambda$ will be more suitable for the proposed warm-starting than the case of varying $\kappa$. The underlying assumption and intuition for the proposed methods are that the integer part of points used to construct the linear relaxation is similar for different parameter values. Here, when changing $\kappa$, it is unlikely that the previously obtained integer solutions are of interest.

\subsubsection{Model predictive control}\label{sec:test_probs_mpc}
For MPC, we use the following example from the website for OSQP \cite{mpc}:
\begin{equation*}
    \begin{array}{rl}
        \displaystyle \min_{x, u} & \sum_{i = 1}^n (x^i - x^r)^\top Q (x^i - x^r) + \sum_{i = 0}^{n-1} (u^i)^\top R (u^i) \\
        \text{s.t.} & Ax^i + Bu^i = x^{i+1} \text{ for all } i \in [n-1]_0,\\
        & x^{\min} \leq x^i \leq x^{\max} \text{ for all } i \in [n], \\
        & u^i \in \{u^{\min}, u^{\min}  + \Delta u, \ldots, u^{\max} - \Delta u, u^{\max}\} \text{ for all } i \in [n-1]_0, \\
        & x^0 = \lambda, \\
        & x^i \in \R^{n_x} \text{ for all } i \in [n]_0, \\
        & u^i \in \R^{n_u} \text{ for all } i \in [n-1]_0.
    \end{array}
\end{equation*}
The vectors $x^i$ are the states and the vectors $u^i$ are the control inputs at each time step $i$. The vector $x^r \in \R^{n_x}$ is the reference state of the model and the vector $\lambda \in \R^{n_x}$ is the initial state. The vectors $x^{\min}, x^{\max} \in \R^{n_x}$ and $u^{\min}, u^{\max} \in \R^{n_u}$ are predefined. The real matrices $A, B, Q$ and $R$ are of suitable size. The problem is solved several times, referred to as simulations, to find the control inputs $u^k$ to get as close as possible to the reference state $x^r$. After each simulation a new intial state $\bar{\lambda}$ is found by applying the control to the first state, i.e., $\bar{\lambda} = A\lambda + Bu^0$. This example was adapted by assuming that the control input $u^i$ only can take a discrete set of values. To obtain the problem in the form of \eqref{eq:p_lambda}, it was rewritten as
\begin{equation}\tag{MPC}\label{eq:mpc}
    \begin{array}{rl}
        \displaystyle \min_{t, x, u, y} & t \\
        \text{s.t.} & \sum_{i = 1}^n (x^i - x^r)^\top Q (x^i - x^r) + \sum_{i = 0}^{n-1} (u^i)^\top R u^i \leq t, \\
        & Ax^i + Bu^i = x^{i+1}  \text{ for all } i \in [n-1]_0, \\
        & x^{\min} \leq x^i \leq x^{\max} \text{ for all } i \in [n], \\
        & u^i = y^i \Delta u \text{ for all } i \in [n-1]_0, \\
        & y^{\min}_j \leq y^i_j \leq y^{\max}_j \text{ for all } i \in [n-1]_0 \text{ and } j \in [n_u]\\ 
        & x^0 = \lambda, \\
        & t \in \R, \\
        & x^i \in \R^{n_x} \text{ for all } i \in [n]_0, \\
        & u^i \in \R^{n_u} \text{ for all } i \in [n-1]_0, \\
        & y^i \in \Z^{n_u} \text{ for all } i \in [n-1]_0.
    \end{array}
\end{equation}
The data used was $y^{\min}_j = -1$ and $y^{\max}_j = 4$ for all $j \in [n_u]$. This corresponds to $u^{\min}_j = -0.5$, $\Delta u_j = 0.5$ and $u^{\max}_j = 2$ for all $j \in [n_u]$. The size was set to $n_x = 12$ and $n_u = 4$. For specifications of $Q, R, A, B, x^{\min}$ and $x^{\max}$, see \cite{mpc}.

\subsection{Numerical results}\label{sec:num_res}
To test the proposed methods, the problems from Subsection \ref{sec:test_probs} are solved and we analyze the results. The implementation was done in Julia v. 1.11.1 using JuMP \cite{Julia-2017, Lubin2023}. The OA algorithm has been implemented by the authors to have full control over the algorithm and extraction of any data. It is worth keeping in mind that the implementations are rather simple, and serve as a proof of concept. The code could be further improved to speed up the warm-starting algorithms. For example, the NLP problems in initialization rule 3 could be run in parallel. However, the results still clearly show the potential of the proposed warm-starting techniques.

The MILP subproblems were solved using Gurobi v. 11.0.3 and the NLP subproblems were solved using Ipopt v. 3.14.16. The tests were executed on a MacBook Pro with the Apple M1 Pro chip using up to 10 threads and 16 GB of memory. The tests were conducted with the following parameters:
\begin{itemize}
    \item For \eqref{eq:ti4} the range was $10.5 \geq \lambda \geq 5.1$ with step size $\Delta\lambda = -0.05$.
    \item For \eqref{eq:ti14} the range was $7 \geq \lambda \geq -3$ with steps size $\Delta\lambda = -0.05$.
    \item For \eqref{eq:slr} a dataset on Wine quality of Portuguese red wine \cite{wine_data} was used. The dataset contains 11 features and 1599 data points. The regression problem was solved first by varying $\lambda$ with $\kappa = 5$ and $0 \leq \lambda \leq 20$ with step size $\Delta\lambda = 0.25$ and it was also solved by varying $\kappa$ with $\lambda = 5$ and $1 \leq \kappa \leq 8$ with $\Delta \kappa = 1$.
    \item For \eqref{eq:mpc} the horizon was $n = 4$ and 15 simulations were run.
\end{itemize}

In Subsection \ref{sec:results_overview} an overview of the results is discussed using some summarizing statistics. Then in Subsection \ref{sec:detailed_results}, more detailed analysis is conducted. The results are finally summarized in Subsection \ref{sec:results_summary}.

\subsubsection{Overview of results}\label{sec:results_overview}
The results are summarized in Table~\ref{tab:num_res}. For each problem and initialization rule, the table reports the time taken to solve the instance in seconds and how many NLP and MILP subproblems were solved in total. The number of MILP subproblems solved is the same as the number of iterations of the OA algorithm.

\begin{table}[ht]
    \centering
    \begin{tabular}{l|l|rrrr}
                                            &       & Relaxation & Rule 1    & Rule 2    & Rule 3       \\
        \hline
        \multirow{3}{4em}{\eqref{eq:ti4}}   & Time  & 6 s & 11 s   & \textbf{1 s}    & 34 s              \\
                                            & NLP   & 850 & 1359   & \textbf{124}    & 2368                \\
                                            & MILP  & 551 & 763    & \textbf{119}    & 122                 \\
        \hline
        \multirow{3}{4em}{\eqref{eq:ti14}}  & Time  & 39 s  & 132 s  & \textbf{1 s}    & 50 s            \\
                                            & NLP   & 1473 & 2777   & \textbf{221}    & 4852                \\
                                            & MILP  & 961 & 1530   & \textbf{214}    & 219                 \\
        \hline
        \multirow{3}{4em}{\eqref{eq:slr} vary $\lambda$}   & Time  & 417 s & 212 s  & \textbf{13 s}   & 116 s    \\
                                            & NLP   & 19037 & 7647   & \textbf{243}    & 8321                  \\
                                            & MILP  & 19037 & 7647   & \textbf{243}    & 261                    \\
        \hline
        \multirow{3}{4em}{\eqref{eq:slr} vary $\kappa$}   & Time  & \textbf{16 s} & 19 s  & 30 s   & 43 s          \\
                                            & NLP   & 1036 & 960    & \textbf{635}    & 2397                   \\
                                            & MILP  & 1036 & 960    & 635    & \textbf{633}                     \\
        \hline
        \multirow{3}{4em}{\eqref{eq:mpc}}   & Time  & 995 s & 754 s  & \textbf{365 s}     & 1893 s                  \\
                                            & NLP   & 2288 & 1684   & \textbf{851}    & 13376                   \\
                                            & MILP  & 2288 & 1684   & \textbf{851}    & 1628                    \\
        \hline
    \end{tabular}
    \caption{Numerical results for solving the test problems using Algorithm \ref{algo:param_OA}. Each problem is solved as specified in Subsection \ref{sec:num_res} where time denotes the total time of solving the instance in seconds, NLP denotes the number of NLP subproblems solved and MILP denotes the number of MILP subproblems solved. The number of MILP subproblems also corresponds to the number of iterations for the OA algorithm. The best result per line is marked in bold.}
    \label{tab:num_res}
\end{table}

We note that relaxation and rule 1 have similar performance with some variability depending on the problem. For example for \eqref{eq:ti14} the relaxation performs better, while for \eqref{eq:slr} when varying $\lambda$ rule 1 is better. The situation illustrated in Example \ref{ex:warm-starting} therefore seems to hold in practice for these problems, i.e., warm-starting using only the optimal solution from the last parameter value is not enough to achieve improved performance.

Better performance is observed for rules 2 and 3, which produce noticeably better results compared to relaxation and rule 1. The only exception is the solution time for \eqref{eq:slr} when varying $\kappa$, where relaxation is the fastest. In general, rule 2 seems efficient with fast solution times and fewer subproblems. The major downside of rule 3 is that many NLP subproblems need to be solved while not decreasing the number of MILP subproblems. The expectation beforehand was that rule 3 would result in fewer iterations to convergence than rule 2 since the linear relaxation is more adapted to the updated feasible set as the linearization points are optimal to (NLP($\hat{y}$)) or (NLP-Feas($\hat{y}$)). This was, however, not the case since roughly as many MILP subproblems are solved for rules 2 and 3. The extra computational work in rule 3 therefore does not seem to pay off in terms of fewer iterations of the OA algorithm. It is not completely clear why this happens, but one possible explanation is that rule 2 approximates the feasible set well enough when the integer part of the optimal solution is constant and when the integer part of the optimal solution changes, rule 3 does not adapt better than rule 2 resulting in a similar number of iterations for both rules.

It should be noted here that the results of rule 3 in some cases can be improved. By considering Algorithm \ref{algo:rule3}, we see that if we know whether an integer point is feasible or infeasible we can reduce the number of NLP subproblems needed to be solved. But even if the number of NLP subproblems solved is halved, it is still notably more than for rule 2 and it will of course not affect the number of MILP subproblems needed to be solved.

\subsubsection{Detailed results}\label{sec:detailed_results}
\pgfplotsset{ every non boxed x axis/.append style={x axis line style=-},
     every non boxed y axis/.append style={y axis line style=-}}

\begin{figure}[p]
    \centering
    \begin{subfigure}[t]{\textwidth}
        \centering
        \begin{tikzpicture}
            \begin{axis}[enlargelimits = false,
                        legend pos=north west,
                        width = 0.65*\textwidth,
                        height = 0.5\textwidth,
                        ymin = -1,
                        xlabel = $\lambda$,
                        ylabel = Iterations,
                        x dir = reverse,
                        legend pos=outer north east]
                \addplot[black, solid] table[x = lambda, y = relaxation]{data_files/oa_iters_biobj1.dat}; \addlegendentry{Relaxation}
                \addplot[red, dashed] table[x = lambda, y = rule1]{data_files/oa_iters_biobj1.dat}; \addlegendentry{Rule 1}
                \addplot[blue, dashdotted] table[x = lambda, y = rule2]{data_files/oa_iters_biobj1.dat}; \addlegendentry{Rule 2}
                \addplot[green, dashdotdotted] table[x = lambda, y = rule3]{data_files/oa_iters_biobj1.dat}; \addlegendentry{Rule 3}
            \end{axis}
        \end{tikzpicture}
        \caption{Number of iterations for the OA algorithm to converge for \eqref{eq:ti4}.}
        \label{fig:oa_iters_ti4}
    \end{subfigure}
    \begin{subfigure}[c]{\textwidth}
        \centering
        \begin{tikzpicture}
            \begin{axis}[enlargelimits = false,
                        legend pos=north west,
                        width = 0.65*\textwidth,
                        height = 0.5\textwidth,
                        ymin = 1,
                        x dir = reverse,
                        ymode = log,
                        ytick = {1, 15, 150},
                        yticklabels = {1, 15, 150},
                        xlabel = $\lambda$,
                        ylabel = Constraints,
                        legend pos=outer north east]
                \addplot[black, solid] table[x = lambda, y = relaxation]{data_files/nCons_biobj1.dat}; \addlegendentry{Relaxation}
                \addplot[black, densely dotted] table[x = lambda, y = relaxation]{data_files/ncons_presolved_biobj1_relaxation_lambda.dat}; \addlegendentry{Relaxation - Presolved}
                \addplot[red, loosely dashed] table[x = lambda, y = rule1]{data_files/nCons_biobj1.dat}; \addlegendentry{Rule 1}
                \addplot[red, densely dashed] table[x = iter, y = restart]{data_files/ncons_presolved_biobj1_restart_lambda.dat}; \addlegendentry{Rule 1 - Presolved}
                \addplot[blue, loosely dashdotted] table[x = lambda, y = rule2]{data_files/nCons_biobj1.dat}; \addlegendentry{Rule 2}
                \addplot[blue, densely dashdotted] table[x = iter, y = cuts]{data_files/ncons_presolved_biobj1_cuts_lambda.dat}; \addlegendentry{Rule 2 - Presolved}
                \addplot[green, loosely dashdotdotted] table[x = lambda, y = rule2]{data_files/nCons_biobj1.dat}; \addlegendentry{Rule 3}
                \addplot[green, densely dashdotdotted] table[x = iter, y = points]{data_files/ncons_presolved_biobj1_points_lambda.dat}; \addlegendentry{Rule 3 - Presolved}
            \end{axis}
        \end{tikzpicture}
        \caption{Number of constraints in the subproblem when the OA algorithm has converged for \eqref{eq:ti4}. Note the logarithmic scale of the vertical axis.}
        \label{fig:ncons_ti4}
    \end{subfigure}
    \begin{subfigure}[b]{\textwidth}
        \centering
        \begin{tikzpicture}[every mark/.append style={mark size=1pt}]
            \begin{groupplot}[
            group style =   {group name = my plots,
                            group size = 1 by 2,
                            %xlabels at = bottom,
                            xticklabels at = edge bottom,
                            ylabels at = edge left,
                            yticklabels at = edge left,
                            vertical sep = 0pt},
                            width = 0.65\textwidth,
                            height = 0.3\textwidth,
                            x dir = reverse,
                            enlargelimits = false]
            \nextgroupplot  [ymin = -4,
                            legend pos = outer north east,
                            axis x line = top,
                            only marks]
                \addplot[black, mark = o] table[x = lambda, y = UB]{data_files/biobj_UB_relax.dat}; \addlegendentry{UB - Relaxation}
                \addplot[black, mark = square] table[x = lambda, y = LB]{data_files/biobj_LB_after_relax.dat}; \addlegendentry{LB - Relaxation}
                \addplot[magenta, mark = o] table[x = lambda, y = UB]{data_files/biobj_UB_restart.dat}; \addlegendentry{UB - Rule 1, 2 \& 3}
                \addplot[red, mark = square] table[x = lambda, y = LB]{data_files/biobj_LB_after_restart.dat}; \addlegendentry{LB - Rule 1}
                \addplot[blue, mark = square] table[x = lambda, y = LB]{data_files/biobj_LB_before_cuts.dat}; \addlegendentry{LB - Rule 2}
                \addplot[green, mark = square] table[x = lambda, y = LB]{data_files/biobj_LB_before_point.dat}; \addlegendentry{LB - Rule 3}
                \coordinate (top) at (rel axis cs:0,1);% coordinate at top
            \nextgroupplot[ymax = -4, xlabel = $\lambda$, axis x line = bottom, only marks]
                \addplot[black, mark = o] table[x = lambda, y = UB]{data_files/biobj_UB_relax.dat};
                \addplot[black, mark = square] table[x = lambda, y = LB]{data_files/biobj_LB_after_relax.dat};
                \addplot[magenta, mark = o] table[x = lambda, y = UB]{data_files/biobj_UB_restart.dat};
                \addplot[red, mark = square] table[x = lambda, y = LB]{data_files/biobj_LB_after_restart.dat};
                \addplot[blue, mark = square] table[x = lambda, y = LB]{data_files/biobj_LB_before_cuts.dat};
                \addplot[green, mark = square] table[x = lambda, y = LB]{data_files/biobj_LB_before_point.dat};
                \coordinate (bot) at (rel axis cs:1,0);% coordinate at bottom of the last plot
            \end{groupplot}
            \path (top-|current bounding box.west)--node[anchor=south,rotate=90] {Bounds} (bot-|current bounding box.west);
        \end{tikzpicture}
        \caption{Initial lower and upper bounds using different initialization rules for \eqref{eq:ti4}. Note the scaling of the vertical axis.}
        \label{fig:LB-UB_ti4}
    \end{subfigure}
    \caption{Results for solving \eqref{eq:ti4}.}
    \label{fig:plots_ti4}
\end{figure}

\begin{figure}[p]
    \centering
    \begin{subfigure}[t]{\textwidth}
        \centering
        \begin{tikzpicture}
            \begin{axis}[enlargelimits = false,
                        legend pos=north west,
                        ymin = -1,
                        width = 0.65\textwidth,
                        height = 0.5\textwidth,
                        xlabel = $\lambda$,
                        ylabel = Iterations,
                        x dir = reverse,
                        legend pos=outer north east]
                \addplot[black, solid] table[x = lambda, y = relaxation]{data_files/oa_iters_biobj2.dat}; \addlegendentry{Relaxation}
                \addplot[red, dashed] table[x = lambda, y = rule1]{data_files/oa_iters_biobj2.dat}; \addlegendentry{Rule 1}
                \addplot[blue, dashdotted] table[x = lambda, y = rule2]{data_files/oa_iters_biobj2.dat}; \addlegendentry{Rule 2}
                \addplot[green, dashdotdotted] table[x = lambda, y = rule3]{data_files/oa_iters_biobj2.dat}; \addlegendentry{Rule 3}
            \end{axis}
        \end{tikzpicture}
        \caption{Number of iterations for the OA algorithm to converge for \eqref{eq:ti14}.}
        \label{fig:oa_iters_ti14}
    \end{subfigure}
    \begin{subfigure}[c]{\textwidth}
        \centering
        \begin{tikzpicture}
            \begin{axis}[enlargelimits = false,
                        legend pos=north west,
                        ymin = 1,
                        width = 0.65\textwidth,
                        height = 0.5\textwidth,
                        x dir = reverse,
                        ymode = log,
                        ytick = {1, 15, 150},
                        yticklabels = {1, 15, 150},
                        ylabel = Constraints,
                        xlabel = $\lambda$,
                        legend pos=outer north east]
                \addplot[black, solid] table[x = lambda, y = relaxation]{data_files/nCons_biobj2.dat}; \addlegendentry{Relaxation}
                \addplot[black, densely dotted] table[x = lambda, y = relaxation]{data_files/ncons_presolved_biobj2_relaxation_lambda.dat}; \addlegendentry{Relaxation - Presolved}
                \addplot[red, loosely dashed] table[x = lambda, y = rule1]{data_files/nCons_biobj2.dat}; \addlegendentry{Rule 1}
                \addplot[red, densely dashed] table[x = iter, y = restart]{data_files/ncons_presolved_biobj2_restart_lambda.dat}; \addlegendentry{Rule 1 - Presolved}
                \addplot[blue, loosely dashdotted] table[x = lambda, y = rule2]{data_files/nCons_biobj2.dat}; \addlegendentry{Rule 2}
                \addplot[blue, densely dashdotted] table[x = iter, y = cuts]{data_files/ncons_presolved_biobj2_cuts_lambda.dat}; \addlegendentry{Rule 2 - Presolved}
                \addplot[green, loosely dashdotdotted] table[x = lambda, y = rule3]{data_files/nCons_biobj2.dat}; \addlegendentry{Rule 3}
                \addplot[green, densely dashdotdotted] table[x = iter, y = points]{data_files/ncons_presolved_biobj2_points_lambda.dat}; \addlegendentry{Rule 3 - Presolved}
            \end{axis}
        \end{tikzpicture}
        \caption{Number of constraints in the subproblem when the OA algorithm has converged for \eqref{eq:ti14}. Note that the vertical axis has a logarithmic scale.}
        \label{fig:ncons:ti14}
    \end{subfigure}
    \begin{subfigure}[b]{\textwidth}
        \centering
        \begin{tikzpicture}[every mark/.append style={mark size=1pt}]
            \begin{groupplot}[
            group style =   {group name = my plots,
                            group size = 1 by 2,
                            %xlabels at = bottom,
                            xticklabels at = edge bottom,
                            ylabels at = edge left,
                            yticklabels at = edge left,
                            vertical sep = 0pt},
                            width = 0.65\textwidth,
                            height = 0.3\textwidth,
                            x dir = reverse,
                            enlargelimits = false]
            \nextgroupplot  [ymin = 3,
                            legend pos = outer north east,
                            axis x line = top,
                            only marks]
                \addplot[black, mark = o] table[x = lambda, y = UB]{data_files/biobj2_UB_relax.dat}; \addlegendentry{UB - Relaxation}
                \addplot[black, mark = square] table[x = lambda, y = LB]{data_files/biobj2_LB_after_relax.dat}; \addlegendentry{LB - Relaxation}
                \addplot[magenta, mark = o] table[x = lambda, y = UB]{data_files/biobj2_UB_restart.dat}; \addlegendentry{UB - Rule 1, 2 \& 3}
                \addplot[red, mark = square] table[x = lambda, y = LB]{data_files/biobj2_LB_after_restart.dat}; \addlegendentry{LB - Rule 1}
                \addplot[blue, mark = square] table[x = lambda, y = LB]{data_files/biobj2_LB_before_cuts.dat}; \addlegendentry{LB - Rule 2}
                \addplot[green, mark = square] table[x = lambda, y = LB]{data_files/biobj2_LB_before_point.dat}; \addlegendentry{LB - Rule 3}
                \coordinate (top) at (rel axis cs:0,1);% coordinate at top
            \nextgroupplot[ymax = 3, xlabel = $\lambda$, axis x line = bottom, only marks]
                \addplot[black, mark = o] table[x = lambda, y = UB]{data_files/biobj2_UB_relax.dat};
                \addplot[black, mark = square] table[x = lambda, y = LB]{data_files/biobj2_LB_after_relax.dat};
                \addplot[magenta, mark = o] table[x = lambda, y = UB]{data_files/biobj2_UB_restart.dat};
                \addplot[red, mark = square] table[x = lambda, y = LB]{data_files/biobj2_LB_after_restart.dat};
                \addplot[blue, mark = square] table[x = lambda, y = LB]{data_files/biobj2_LB_before_cuts.dat};
                \addplot[green, mark = square] table[x = lambda, y = LB]{data_files/biobj2_LB_before_point.dat};
                \coordinate (bot) at (rel axis cs:1,0);% coordinate at bottom of the last plot
            \end{groupplot}
            \path (top-|current bounding box.west)--node[anchor=south,rotate=90] {Bounds} (bot-|current bounding box.west);
        \end{tikzpicture}
        \caption{Initial lower and upper bounds using different initialization rules for \eqref{eq:ti14}. Note the scaling of the vertical axis.}
        \label{fig:LB-UB_ti14}
    \end{subfigure}
    \caption{Results for solving \eqref{eq:ti14}.}
    \label{fig:plots_ti14}
\end{figure}

\begin{figure}[p]
    \centering
    \begin{subfigure}[t]{\textwidth}
        \centering
        \begin{tikzpicture}
            \begin{axis}[enlargelimits = false,
                        legend pos=north west,
                        ymin = -1,
                        width = 0.65\textwidth,
                        height = 0.5\textwidth,
                        xlabel = $\lambda$,
                        ylabel = Iterations,
                        legend pos=outer north east]
                \addplot[black, solid] table[x = lambda, y = relaxation]{data_files/oa_iters_slr.dat}; \addlegendentry{Relaxation}
                \addplot[red, dashed] table[x = lambda, y = rule1]{data_files/oa_iters_slr.dat}; \addlegendentry{Rule 1}
                \addplot[blue, dashdotted] table[x = lambda, y = rule2]{data_files/oa_iters_slr.dat}; \addlegendentry{Rule 2}
                \addplot[green, dashdotdotted] table[x = lambda, y = rule3]{data_files/oa_iters_slr.dat}; \addlegendentry{Rule 3}
            \end{axis}
        \end{tikzpicture}
        \caption{Number of iterations for the OA algorithm to converge for SLR by varying $\lambda$.}
        \label{fig:oa_iters_slr_lambda}
    \end{subfigure}
    \begin{subfigure}[c]{\textwidth}
        \centering
        \begin{tikzpicture}
            \begin{axis}[enlargelimits = false,
                        legend pos=north west,
                        ymin = 150,
                        width = 0.65\textwidth,
                        height = 0.5\textwidth,
                        xlabel = $\lambda$,
                        ylabel = Constraints,
                        legend pos=outer north east]
                \addplot[black, solid] table[x = lambda, y = relaxation]{data_files/nCons_slr.dat}; \addlegendentry{Relaxation}
                \addplot[black, densely dotted] table[x = lambda, y = relaxation]{data_files/ncons_presolved_slr_relaxation_lambda.dat}; \addlegendentry{Relaxation - Presolved}
                \addplot[red, loosely dashed] table[x = lambda, y = rule1]{data_files/nCons_slr.dat}; \addlegendentry{Rule 1}
                \addplot[red, densely dashed] table[x = iter, y = restart]{data_files/ncons_presolved_slr_restart_lambda.dat}; \addlegendentry{Rule 1 - Presolved}
                \addplot[blue, loosely dashdotted] table[x = lambda, y = rule2]{data_files/nCons_slr.dat}; \addlegendentry{Rule 2}
                \addplot[blue, densely dashdotted] table[x = iter, y = cuts]{data_files/ncons_presolved_slr_cuts_lambda.dat}; \addlegendentry{Rule 2 - Presolved}
                \addplot[green, loosely dashdotdotted] table[x = lambda, y = rule3]{data_files/nCons_slr.dat}; \addlegendentry{Rule 3}
                \addplot[green, densely dashdotdotted] table[x = iter, y = points]{data_files/ncons_presolved_slr_points_lambda.dat}; \addlegendentry{Rule 3 - Presolved}
            \end{axis}
        \end{tikzpicture}
        \caption{Number of constraints in the subproblem when the OA algorithm has converged for SLR by varying $\lambda$.}
        \label{fig:ncons_slr_lambda}
    \end{subfigure}
    \begin{subfigure}[b]{\textwidth}
        \centering
        \begin{tikzpicture}[every mark/.append style={mark size=1pt}]
            \begin{groupplot}[
            group style =   {group name = my plots,
                            group size = 1 by 2,
                            %xlabels at = bottom,
                            xticklabels at = edge bottom,
                            ylabels at = edge left,
                            yticklabels at = edge left,
                            vertical sep = 0pt},
                            width = 0.65\textwidth,
                            height = 0.3\textwidth,
                            %x dir = reverse,
                            enlargelimits = false]
            \nextgroupplot  [ymin = 325,
                            ymax = 410,
                            legend pos = outer north east,
                            axis x line = top,
                            only marks]
                \addplot[black, mark = o] table[x = lambda, y = UB]{data_files/slr_lambda_UB_relax.dat}; \addlegendentry{UB - Relaxation}
                \addplot[black, mark = square] table[x = lambda, y = LB]{data_files/slr_lambda_LB_after_relax.dat}; \addlegendentry{LB - Relaxation}
                \addplot[magenta, mark = o] table[x = lambda, y = UB]{data_files/slr_lambda_UB_restart.dat}; \addlegendentry{UB - Rule 1, 2 \& 3}
                \addplot[red, mark = square] table[x = lambda, y = LB]{data_files/slr_lambda_LB_after_restart.dat}; \addlegendentry{LB - Rule 1}
                \addplot[blue, mark = square] table[x = lambda, y = LB]{data_files/slr_lambda_LB_before_cuts.dat}; \addlegendentry{LB - Rule 2}
                \addplot[green, mark = square] table[x = lambda, y = LB]{data_files/slr_lambda_LB_before_point.dat}; \addlegendentry{LB - Rule 3}
                \coordinate (top) at (rel axis cs:0,1);% coordinate at top
            \nextgroupplot[ymax = 325, xlabel = $\lambda$, axis x line = bottom, only marks]
                \addplot[black, mark = o] table[x = lambda, y = UB]{data_files/slr_lambda_UB_relax.dat};
                \addplot[black, mark = square] table[x = lambda, y = LB]{data_files/slr_lambda_LB_after_relax.dat};
                \addplot[magenta, mark = o] table[x = lambda, y = UB]{data_files/slr_lambda_UB_restart.dat};
                \addplot[red, mark = square] table[x = lambda, y = LB]{data_files/slr_lambda_LB_after_restart.dat};
                \addplot[blue, mark = square] table[x = lambda, y = LB]{data_files/slr_lambda_LB_before_cuts.dat};
                \addplot[green, mark = square] table[x = lambda, y = LB]{data_files/slr_lambda_LB_before_point.dat};
                \coordinate (bot) at (rel axis cs:1,0);% coordinate at bottom of the last plot
            \end{groupplot}
            \path (top-|current bounding box.west)--node[anchor=south,rotate=90] {Bounds} (bot-|current bounding box.west);
        \end{tikzpicture}
        \caption{Initial lower and upper bounds using different initialization rules for \eqref{eq:slr} by varying $\lambda$. Note the scaling of the vertical axis.}
        \label{fig:LB-UB_slr_lambda}
    \end{subfigure}
    \caption{Results for solving \eqref{eq:slr} by varying $\lambda$.}
    \label{fig:plots_slr_lambda}
\end{figure}

\begin{figure}[p]
    \centering
    \begin{subfigure}[t]{\textwidth}
        \centering
        \begin{tikzpicture}
            \begin{axis}[enlargelimits = false,
                        legend pos=north west,
                        ymin = -1,
                        width = 0.65\textwidth,
                        height = 0.5\textwidth,
                        xlabel = $\kappa$,
                        ylabel = Iterations,
                        legend pos=outer north east]
                \addplot[black, solid] table[x = iter, y = relaxation]{data_files/oa_iters_slr_kappa.dat}; \addlegendentry{Relaxation}
                \addplot[red, dashed] table[x = iter, y = rule1]{data_files/oa_iters_slr_kappa.dat}; \addlegendentry{Rule 1}
                \addplot[blue, dashdotted] table[x = iter, y = rule2]{data_files/oa_iters_slr_kappa.dat}; \addlegendentry{Rule 2}
                \addplot[green, dashdotdotted] table[x = iter, y = rule3]{data_files/oa_iters_slr_kappa.dat}; \addlegendentry{Rule 3}
            \end{axis}
        \end{tikzpicture}
        \caption{Number of iterations for the OA algorithm to converge for SLR by varying $\kappa$.}
        \label{fig:oa_iters_slr_kappa}
    \end{subfigure}
    \begin{subfigure}[c]{\textwidth}
        \centering
        \begin{tikzpicture}
            \begin{axis}[enlargelimits = false,
                        legend pos=north west,
                        ymin = 0,
                        width = 0.65\textwidth,
                        height = 0.5\textwidth,
                        xlabel = $\kappa$,
                        ylabel = Constraints,
                        xmin = 1,
                        xmax = 8,
                        legend pos=outer north east]
                \addplot[black, solid] table[x = iter, y = relaxation]{data_files/nCons_slr_kappa.dat}; \addlegendentry{Relaxation}
                \addplot[black, densely dotted] table[x = kappa, y = relaxation]{data_files/ncons_presolved_slr_kappa_relaxation_lambda.dat}; \addlegendentry{Relaxation - Presolved}
                \addplot[red, loosely dashed] table[x = iter, y = rule1]{data_files/nCons_slr_kappa.dat}; \addlegendentry{Rule 1}
                \addplot[red, densely dashed] table[x = iter, y = restart]{data_files/ncons_presolved_slr_kappa_restart.dat}; \addlegendentry{Rule 1 - Presolved}
                \addplot[blue, loosely dashdotted] table[x = iter, y = rule2]{data_files/nCons_slr_kappa.dat}; \addlegendentry{Rule 2}
                \addplot[blue, densely dashdotted] table[x = iter, y = cuts]{data_files/ncons_presolved_slr_kappa_cuts.dat}; \addlegendentry{Rule 2 - Presolved}
                \addplot[green, loosely dashdotdotted] table[x = iter, y = rule3]{data_files/nCons_slr_kappa.dat}; \addlegendentry{Rule 3}
                \addplot[green, densely dashdotdotted] table[x = iter, y = points]{data_files/ncons_presolved_slr_kappa_points.dat}; \addlegendentry{Rule 3 - Presolved}
            \end{axis}
        \end{tikzpicture}
        \caption{Number of constraints in the subproblem when the OA algorithm has converged for SLR by varying $\kappa$.}
        \label{fig:ncons_slr_kappa}
    \end{subfigure}
    \begin{subfigure}[b]{\textwidth}
    \centering
    \begin{tikzpicture}[every mark/.append style={mark size=1pt}]
        \begin{axis}[enlargelimits = false,
                    legend pos = outer north east,
                    width = 0.65\textwidth,
                    height = 0.5\textwidth,
                    xlabel = $\kappa$,
                    ylabel = Bounds,
                    only marks,
                    ymax = 450]
                \addplot[black, mark = o] table[x = kappa, y = UB]{data_files/slr_kappa_UB_relax.dat}; \addlegendentry{UB - Relaxation}
                \addplot[black, mark = square] table[x = kappa, y = LB]{data_files/slr_kappa_LB_after_relax.dat}; \addlegendentry{LB - Relaxation}
                \addplot[magenta, mark = o] table[x = kappa, y = UB]{data_files/slr_kappa_UB_restart.dat}; \addlegendentry{UB - Rule 1, 2 \& 3}
                \addplot[red, mark = square] table[x = kappa, y = LB]{data_files/slr_kappa_LB_after_restart.dat}; \addlegendentry{LB - Rule 1}
                \addplot[blue, mark = square] table[x = kappa, y = LB]{data_files/slr_kappa_LB_before_cuts.dat}; \addlegendentry{LB - Rule 2}
                \addplot[green, mark = square] table[x = kappa, y = LB]{data_files/slr_kappa_LB_before_point.dat}; \addlegendentry{LB - Rule 3}
        \end{axis}
    \end{tikzpicture}
    \caption{Initial lower and upper bounds using different initialization rules for \eqref{eq:slr} by varying $\kappa$.}
    \label{fig:LB-UB_slr_kappa}
    \end{subfigure}
    \caption{Results for solving \eqref{eq:slr} by varying $\kappa$.}
    \label{fig:plots_slr_kappa}
\end{figure}

\begin{figure}[p]
    \centering
    \begin{subfigure}[t]{\textwidth}
        \centering
        \begin{tikzpicture}
            \begin{axis}[enlargelimits = false,
                        legend pos=north west,
                        ymin = -1,
                        width = 0.65\textwidth,
                        height = 0.5\textwidth,
                        xlabel = Simulation,
                        xtick = {1, 2, 3, 4, 5, 6, 7, 8, 9, 10, 11, 12, 13, 14, 15},
                        ylabel = Iterations,
                        legend pos=outer north east]
                \addplot[black, solid] table[x = iter, y = relax]{data_files/oa_iters_mpc.dat}; \addlegendentry{Relaxation}
                \addplot[red, dashed] table[x = iter, y = restart]{data_files/oa_iters_mpc.dat}; \addlegendentry{Rule 1}
                \addplot[blue, dashdotted] table[x = iter, y = cuts]{data_files/oa_iters_mpc.dat}; \addlegendentry{Rule 2}
                \addplot[green, dashdotdotted] table[x = iter, y = points]{data_files/oa_iters_mpc.dat}; \addlegendentry{Rule 3}
            \end{axis}
        \end{tikzpicture}
        \caption{Number of iterations for the OA algorithm to converge for MPC.}
        \label{fig:oa_iters_mpc}
    \end{subfigure}
    \hfill
    \begin{subfigure}[c]{\textwidth}
        \centering
        \begin{tikzpicture}
            \begin{axis}[enlargelimits = false,
                        legend pos=north west,
                        ymin = 0,
                        width = 0.65\textwidth,
                        height = 0.5\textwidth,
                        xtick = {1, 2, 3, 4, 5, 6, 7, 8, 9, 10, 11, 12, 13, 14, 15},
                        xlabel = Simulation,
                        ylabel = Constraints,
                        legend pos=outer north east]
                \addplot[black, solid] table[x = iter, y = relax]{data_files/nCons_mpc.dat}; \addlegendentry{Relaxation}
                \addplot[black, densely dotted] table[x = iter, y = relaxation]{data_files/ncons_presolved_mpc_relaxation.dat}; \addlegendentry{Relaxation - Presolved}
                \addplot[red, loosely dashed] table[x = iter, y = restart]{data_files/nCons_mpc.dat}; \addlegendentry{Rule 1}
                \addplot[red, densely dashed] table[x = iter, y = restart]{data_files/ncons_presolved_mpc_restart.dat}; \addlegendentry{Rule 1 - Presolved}
                \addplot[blue, loosely dashdotted] table[x = iter, y = cuts]{data_files/nCons_mpc.dat}; \addlegendentry{Rule 2}
                \addplot[blue, densely dashdotted] table[x = iter, y = cuts]{data_files/ncons_presolved_mpc_cuts.dat}; \addlegendentry{Rule 2 - Presolved}
                \addplot[green, loosely dashdotdotted] table[x = iter, y = points]{data_files/nCons_mpc.dat}; \addlegendentry{Rule 3}
                \addplot[green, densely dashdotdotted] table[x = iter, y = points]{data_files/ncons_presolved_mpc_points.dat}; \addlegendentry{Rule 3 - Presolved}
            \end{axis}
        \end{tikzpicture}
        \caption{Number of constraints in the subproblem when the OA algorithm has converged for MPC.}
        \label{fig:ncons_mpc}
    \end{subfigure}
    \begin{subfigure}[b]{\textwidth}
    \centering
    \begin{tikzpicture}[every mark/.append style={mark size=1pt}]
        \begin{axis}[enlargelimits = false,
                    legend pos = outer north east,
                    width = 0.65\textwidth,
                    height = 0.5\textwidth,
                    xlabel = Simulation,
                    ylabel = Bounds,
                    only marks]
                \addplot[black, mark = o] table[x = iteration, y = UB]{data_files/mpc_UB_relax.dat}; \addlegendentry{UB - Relaxation}
                \addplot[black, mark = square] table[x = iteration, y = LB]{data_files/mpc_LB_after_relax.dat}; \addlegendentry{LB - Relaxation}
                \addplot[magenta, mark = o] table[x = iteration, y = UB]{data_files/mpc_UB_point.dat}; \addlegendentry{UB - Rule 1, 2 \& 3}
                \addplot[red, mark = square] table[x = iteration, y = LB]{data_files/mpc_LB_after_restart.dat}; \addlegendentry{LB - Rule 1}
                \addplot[blue, mark = square] table[x = iteration, y = LB]{data_files/mpc_LB_before_cuts.dat}; \addlegendentry{LB - Rule 2}
                \addplot[green, mark = square] table[x = iteration, y = LB]{data_files/mpc_LB_before_point.dat}; \addlegendentry{LB - Rule 3}
        \end{axis}
    \end{tikzpicture}
    \caption{Initial lower and upper bounds using different initialization rules for \eqref{eq:mpc}.}
    \label{fig:LB-UB_mpc}
    \end{subfigure}
    \caption{Results for solving \eqref{eq:mpc}.}
    \label{fig:plots_mpc}
\end{figure}

We now consider more detailed results of the algorithms in Figure \ref{fig:plots_ti4}, \ref{fig:plots_ti14}, \ref{fig:plots_slr_lambda}, \ref{fig:plots_slr_kappa} and \ref{fig:plots_mpc}. For each problem, there are three subplots. Plot (a) shows the number of iterations that the OA algorithm performs for each parameter value. Plot (b) shows the number of constraints in the MILP subproblem when the OA algorithm has converged for each parameter value. Plot (c) shows the initial lower and upper bounds of each rule.

We first consider the number of iterations. We see that relaxation and rule 1 have similar performance overall. For \eqref{eq:ti4} and \eqref{eq:ti14} relaxation performs slightly better, while the reverse happens for \eqref{eq:slr}. In all cases, rules 2 and 3 need fewer iterations than relaxation. This is a promising result since it means that applying the rules does indeed warm-start the algorithm. For rules 2 and 3, the number of iterations needed is similar. Interestingly it can be noted that the spikes, most noticeably visible in Figure \ref{fig:oa_iters_ti4} and \ref{fig:oa_iters_ti14} correspond to values of $\lambda$ where the optimal integer point changes. This means that as long as the same integer point is optimal for the MINLP, the warm-starting is highly efficient. We even observe that for many parameter values, only one iteration of the OA algorithm is needed to converge. This confirms the theory and intuition that were developed in Section \ref{sec:theory-properties}.

We can now also compare Figure \ref{fig:oa_iters_slr_lambda} and \ref{fig:oa_iters_slr_kappa}. This corresponds to the problem of varying either $\lambda$ or $\kappa$ in \eqref{eq:slr}. In the case of varying $\lambda$, rules 2 and 3 have a significant impact on the number of iterations needed. They require only 1 to 3 iterations to converge, compared to 150 to 200 iterations for the relaxation or rule 1. We compare this to the case of varying $\kappa$. For this case some improvement is observed from using rule 2 or rule 3, but it is less pronounced. The number of iterations needed to converge remains in the same magnitude. The difference between the two cases is that the integer points of interest remain similar in the case of varying $\lambda$ which is not true when varying $\kappa$. That is, the integer part of the optimal solution remains constant for several values of $\lambda$, while it almost always changes between different values of $\kappa$. Remember that changing $\kappa$ means changing the number of variables forced to zero which in turn is modelled with binary variables meaning that the number of binary variables forced to be zero changes. This confirms the intuition that the methods perform well when the integer part of the optimal solution remains constant for several parameter values.

Lastly the results for \eqref{eq:mpc}, plotted in Figure \ref{fig:oa_iters_mpc}, are more complicated. Some of the problems are for some reason harder to solve, especially the fifth problem. We also see that from the eighth simulation, rules 2 and 3 outperform the relaxation and rule 1. One possible explanation is that after some iterations, enough information has been collected for fast convergence. Another possible explanation is that the MPC control has stabilized meaning that few iterations are needed since it only needs to certify the optimality of the current control input. Until simulation six, rule 3 needs fewer iterations, meaning that it seems to adapt better to the differences between problems. On the other hand, from simulation seven rule 2 needs fewer iterations suggesting that when the control has converged reusing cuts is enough to quickly certify optimality.

We now investigate the number of constraints in the MILP subproblem when the OA algorithm has converged for each value of $\lambda$ in Figure \ref{fig:ncons_ti4}, \ref{fig:ncons:ti14}, \ref{fig:ncons_slr_lambda}, \ref{fig:ncons_slr_kappa} and \ref{fig:ncons_mpc}. The presolved lines correspond to the size of the problem when Gurobis presolve has been applied to the MILP subproblem that was constructed when the OA algorithm converged. For the problems \eqref{eq:ti4} and \eqref{eq:ti14} the trend is similar. We see that for relaxation, rule 1 and rule 3, the size of the problems remains fairly similar, while the size of the problems for rule 2 grows noticeably. However, when presolve is applied the size is of the same magnitude as for the other rules. This suggests that it would be possible to introduce some sort of cut selection to make sure that unnecessary cuts are not kept throughout the algorithm. Remember that rule 2 was the fastest of the methods, so one could say that the rule is saved by the highly efficient presolve implemented in commercial MIP solvers.

For the SLR problem, plotted in Figure \ref{fig:ncons_slr_lambda} and \ref{fig:ncons_slr_kappa}, the difference in the size of the subproblems is not as pronounced. We also note that presolve has less of an effect on the size of the problem. This suggests that there is less possibility of removing cuts during the algorithm without affecting the performance. The number of constraints for MPC, plotted in Figure \ref{fig:ncons_mpc}, has a similar evolution as for the SLR problem, with the exception that rule 3 keeps growing.

Finally, we analyze how the initial lower and upper bounds behave for each rule, which are plotted in Figure \ref{fig:LB-UB_ti4}, \ref{fig:LB-UB_ti14}, \ref{fig:LB-UB_slr_lambda}, \ref{fig:LB-UB_slr_kappa} and \ref{fig:LB-UB_mpc}. Note that an initially good lower or upper bound does not necessarily imply fast convergence. For \eqref{eq:ti4} and \eqref{eq:ti14}, the lower bounds for rule 1 are much worse than the other rules with rules 2 and 3 being slightly better than relaxation. For \eqref{eq:slr}, when varying $\lambda$, the lower bounds are similar, but the upper bounds for rules 1, 2 and 3 are better than the upper bounds for relaxation. For \eqref{eq:slr}, when varying $\kappa$, we instead observe that the lower bounds for relaxation are better than the warm-starting rules. In the case of \eqref{eq:mpc}, the lower bounds are similar for the rules but for the upper bounds relaxation is noticeably worse than rules 1, 2 and 3. In summary, the rule which achieves the best initial lower and upper bounds differs, but it is clear that the warm-starting rules perform better in several cases. An interesting observation is how poor the lower bound from rule 1 is in several cases. This once again underlines that the idea of warm-starting the OA algorithm only using a near-optimal solution is not an effective method.

\subsubsection{Summary of results}\label{sec:results_summary}
The results show that only using the optimal solution is not enough to achieve an effective warm-starting procedure and can even be worse than the standard technique of relaxation since the baseline and rule 1 perform similarly across the problems. This confirms the need for warm-starting procedures. We can also conclude that the results for rule 2 and 3 show that rather simple techniques can improve the performance when using the OA algorithm in this setting. In most cases we need to solve less than half as many MILP subproblems compared to relaxation when using rule 2 or 3.

Depending on the structure of the problem, the effectiveness varies with the most performance gain observed when the integer part of the optimal solution remains constant for several parameter values. On the downside, the size of the MILP subproblem can grow, most noticeably when using rule 2 since it is, in some cases, collecting many cuts that are no longer useful. The comparison with the size of the presolved problem shows that in some cases, most noticeably for the biobjective problems, this can potentially be handled by cut selection.

\section{Conclusions and future research}\label{sec:conclusions}
The purpose of this paper has been to investigate the potential of warm-starting techniques for solving sequences of convex MINLPs. We have chosen to focus on the OA algorithm to solve each convex MINLP. An example illustrated why the trivial warm-starting approach of starting in the optimal solution is not sufficient. After that, theoretical results were developed which showed that under some conditions warm-starting can be used efficiently resulting in convergence in one iteration. Practical warm-starting rules were then presented that can be integrated into an algorithm to solve a parameterized convex MINLP. The main contributions were the two rules, called cut-tightening and point-based, that were compared to the baseline rules of using the continuous relaxation or starting in the optimal solution to the last parameter value. Using five examples, numerical results were produced for the different rules. The results showed that there is significant potential in warm-starting techniques, since even with simple rules large gains can be achieved. It was noted that especially in the case when the integer part of the optimal solution remains constant for several parameter values, the warm-starting is highly efficient, while it struggles more when the integer solution switches between parameter values. By comparing the size of the MILP subproblem to the size after presolve, it was suggested that there is potential for cut selection to reduce the size of the subproblems. The conclusion is still that warm-starting improves the efficiency of solving parameterized convex MINLPs and has important potential for further research.

We can identify several interesting directions for future research based on this paper. First, an interesting question is how warm-starting can be used in other cutting plane algorithms for solving convex MINLPs. The conclusion from this paper that warm-starting can achieve a performance improvement should also hold for other algorithms. Second, implementing and integrating the techniques in established software packages, for instance SCIP, and further refining the methods would make the methods accessible and enable more extensive numerical tests. Lastly, adding some cut selection to the cut based method to handle the growth of the MILP subproblems could be beneficial for performance.

\bibliographystyle{amsplain}
\bibliography{bibliography.bib}

\end{document}